\documentclass{article}
\usepackage{a4wide,amsmath,amssymb,amsthm}
\usepackage{bbm}
\usepackage[numbers]{natbib}
\usepackage{graphicx}
\usepackage{url}
\usepackage{subcaption}

\newcommand{\mR}{\mathbb{R}}

\renewcommand{\phi}{\varphi}

\newcommand{\one}{\mathbbm{1}}
\newcommand{\ki}{^{(k)}}
\newcommand{\kn}{^{(k+1)}}
\newcommand{\pOT}{\phi}
\newcommand{\preg}{\phi_\eta}

\newcommand{\ed}[1]{{\rm #1}}

\newtheorem{theorem}{Theorem}[section]
\newtheorem{lemma}[theorem]{Lemma}

\newtheorem{corollary}[theorem]{Corollary}
\newtheorem{proposition}[theorem]{Proposition}
\theoremstyle{definition}
\newtheorem{definition}[theorem]{Definition}

\newtheorem{remark}[theorem]{Remark}
\newtheorem{example}[theorem]{Example}

\newtheorem*{example*}{Example}

\renewcommand{\baselinestretch}{1.5}
\let\origfootnote\footnote
\renewcommand{\footnote}[1]{%
   \begingroup
   \renewcommand{\baselinestretch}{1}%
   \origfootnote{#1}%
   \endgroup}

\begin{document}

\title{A Multi-Objective Interpretation of Optimal Transport}

\author{%
Johannes M. Schumacher%
\thanks{\baselineskip12pt University of Amsterdam, Faculty of Economics and Business, Section Quantitative Economics;
{\tt j.m.schumacher@uva.nl}.}}

\maketitle
\begin{abstract}
\noindent
This paper connects discrete optimal transport to a certain class of multi-objective optimization problems. In both settings, the decision variables can be organized into a matrix. In the multi-objective problem, the notion of Pareto efficiency is defined in terms of the objectives together with non-negativity constraints and with equality constraints that are specified in terms of column sums. A second set of equality constraints, defined in terms of row sums, is used to single out particular points in the Pareto efficient set which are referred to as ``balanced solutions''\!. Examples from several fields are shown in which this solution concept appears naturally. Balanced solutions are shown to be in one-to-one correspondence with solutions of optimal transport problems. As an example of the use of alternative interpretations, the computation of solutions via regularization is discussed.
\end{abstract}

\section{Introduction} \label{introduction}

The theory of optimal transport is an extremely rich research area, with applications ranging from probability theory to mathematical physics, from image processing to partial differential equations, and from city planning to differential geometry. The two volumes by Rachev and R\"{u}schendorf \cite{Rachev1,Rachev2} and the celebrated books by Fields medalist Villani \cite{Villani03,Villani} provide ample evidence of the spectacular growth of the field since the original work of Monge \cite{Monge}, Kantorovich \cite{Kantorovich}, and Fr\'echet \cite{Frechet}. Specialized treatments for applied mathematicians and for economists, respectively, are provided in the
recent books by Santambrogio \cite{Santambrogio} and Galichon \cite{Galichon}.

The purpose of the present paper is to highlight a connection between discrete optimal transport and a certain class of multi-objective problems. This connection appears to have gone unnoticed so far, even though the proof is not difficult, and the corresponding result for strictly concave problems has been known for several decades \cite{Gale77,Gale79,Buehlmann79}. By means of this connection, the techniques of optimal transport can be applied to certain multi-objective problems, while at the same time an alternative perspective on optimal transport is obtained. As an illustration of the latter point, it will be shown below how the popular entropic regularization of optimal transport problems \cite{Cuturi,Benamou} has a natural interpretation as an isoelastic regularization within the multi-objective framework. The transfer from a single objective to multiple objectives also entails a change from an additive to a multiplicative environment, and in this way leads naturally to applications of nonlinear Perron-Frobenius theory. This theory has been used in \cite{Brualdi} (cf.\ also \cite{Lemmens}) to analyze a matrix scaling problem that can be related to single-objective concave optimization, and it comes up perhaps even more naturally in the context of multi-objective problems \cite{PSW1}. The application of nonlinear Perron-Frobenius theory will be illustrated below.

In this paper, the term ``multi-objective problem'' is used to cover multi-agent problems (several agents, each with their own objective), as well as multicriteria or vector optimization problems (one agent with several objectives). The solution concept that will be employed can be used in both contexts, as illustrated in Section \ref{examples}. The examples in this section also show the relevance of the concept in different fields: operations research, statistics, and actuarial science.

From the multi-agent point of view, the multi-objective problems considered in this paper may be placed within the broad class of fair division problems \cite{Brams,Moulin}. Typical features of problems considered in this active area of research include multiple agents, each equipped with their own utility functionals, and resources in limited supply. Solution concepts are based on notions of \emph{efficiency} and \emph{fairness}. While many different notions of fairness have been proposed, the one that appears to be used most often is envy-freeness: no agent would prefer anybody else's share to her own \cite{Foley}. In the present paper, a different concept of fairness is used, which is less standard in the literature on fair division, and which is based on bringing in a notion of \emph{size} that is independent of any particular agent. This fairness concept can be viewed as a version of ``exogenous rights''\!, mentioned as one of four principles of distributive justice by Moulin \cite[Ch.\,2]{Moulin}, or of ``entitlements'' as discussed by Brams \cite{Brams}. In contrast to the use of entitlements in \cite{Brams}, the size constraint is taken in this paper as an equality constraint, rather than as an inequality constraint. It will be assumed below that allotments to agents can be represented as vectors, and that size is expressed as a single number determined by a linear functional.

Size constraints can be used as a way to single out unique solutions among the set of all efficient solutions. In that sense, their role is comparable to, for instance, the Nash collective utility function \cite{Nash}. The use of size constraints presupposes the availability of a measurement functional by which the magnitudes of allotments can be determined. In this sense, size constraints call for a richer problem environment than the Nash bargaining solution does. In addition to the availability of a measurement functional, the problem data should provide, for each of the agents/objectives, the value of the right-hand side of the equation that expresses the corresponding size constraint. Below, these numbers appear as row sums that are prefixed.

In the literature on multicriteria optimization (see for instance \cite{Ehrgott}), several methods have been proposed to select particular solutions among the set of all Pareto efficient solution, such as scalarization and lexicographic ordering. Scalarization is a way to relate a multi-objective problem to a single-objective problem. While this paper also establishes a connection between single-objective and multi-objective problems, and weighted-sum scalarization will be used as part of the proof technique, it should be emphasized that the connection established below cannot be viewed as a scalarization. The objective function in the associated single-objective problem cannot be obtained through aggregation of the objective functions in the multicriteria problem by means of an achievement function, as in
\cite[Section 4.6]{Ehrgott}. Multi-objective optimal transport has been studied for instance by Isermann \cite{Isermann79}, but again this is different from the problem considered in this paper, which is to connect optimal solutions of single-objective optimal transport problems to balanced solutions (as introduced below) of a class of multi-objective problems.

In the context of equilibrium economics, efficient solutions that satisfy budgetary size constraints have been termed \emph{BCPE solutions} (budget-constrained Pareto efficient solutions) by Ba\-lasko \cite{Balasko}. Gale and Sobel \cite{Gale79} do not introduce a separate term for the combination of Pareto efficiency and satisfaction of budget constraints; they speak of \emph{optimal and fair} solutions (\emph{optimal and proportional} in \cite{Gale82}). B\"{u}hlmann and Jewell \cite{Buehlmann79} use the term \emph{fair Pareto optimal risk exchanges} (FAIRPOREX). Neither of these terminologies appears to be entirely appropriate to cover the range of different situations exemplified below. For generality and simplicity, the term \emph{balanced solutions} will be used in this paper.\footnote{It is difficult to find expressions that have not already been used elsewhere. The term ``balanced outcome'' has been used by \cite{Kleinberg} to refer to an extension of the Nash bargaining solution to networks, which is different from but somewhat related to the notion used in this paper. In the Shapley-Bondereva theorem (see for instance \cite{Myerson}), balancedness is used in a quite different sense. In connection with the transportation problem, the term ``balanced'' is sometimes used to refer to a property that is called the global feasibility condition in this paper; see (\ref{gfc}).}

Individual rationality and behavioral effects do not play a role in the definition of balanced solutions. In some (but not all) of the examples shown below, it would in fact be quite unnatural to introduce these notions. All in all, it may perhaps be said that the concept of a balanced solution is closer in spirit to the multicriteria setting than to the multi-agent setting. In the case where multiple agents are involved, the concept represents a social planner's  point of view.

The organization of the paper is as follows. Following this introduction, a formal definition of the notion of balanced solutions is given in Section \ref{sec_balanced}, along with three examples of situations in which this notion may be used. The same section also provides a characterization of balanced solutions. This characterization is used in Section \ref{connection} to establish the connection to optimal transport. Regularization and the connection to Perron-Frobenius theory are discussed in Section \ref{iterative}. A discussion
of numerical issues follows in Section \ref{numerics}, and Section \ref{conclusions} concludes.

\section{Balanced solutions of linear multi-objective problems} \label{sec_balanced}

\subsection{Definitions}

In the optimal transport problem, the decision variables are organized into a matrix, and there are constraints related to row sums, as well as constraints related to column sums. Rows and columns typically correspond to objects that are different in nature, for instance producers and consumers. The problem is formulated with a single objective. As will be illustrated below, situations arise in practice which similarly call for a table to be filled, taking into account row sum constraints, as well as column sum constraints, but which are different in the sense that there are multiple objectives, which relate (by convention) to the rows of the matrix that is to be formed. Additionally, as shown in the examples below, there is often a certain status difference between the row constraints and the column constraints. The column constraints express ``feasibility'', whereas the row constraints express a notion of ``fairness''\!. While the column constraints can be thought of being imposed by Nature, the row constraints are man-made, and the right-hand sides used in these constraints are to some extent a matter of choice. Such a hierarchy leads to the notion of what we will call ``balanced'' solutions of the multi-objective problem. The idea is that the objective functions together with the feasibility constraints are used to define a Pareto efficient surface, and that the row constraints are then used to select specific solutions on the efficient surface. Under suitable circumstances, it may happen that there is a unique balanced solution. A precise definition is given below. In the interest of generality, the terms ``primary'' and ``secondary'' are used to refer to the two types of constraints, rather than ``feasibility'' and ``fairness''\!.
\vskip2mm
\begin{definition} \label{moma}
A \emph{multi-objective matrix allocation} (MOMA) problem is specified by
\begin{itemize}
\item[(i)] the decision variables $x_{ij}$, which together form a matrix $X$ of size $n \times m$;
\item[(ii)] objective functions given by
$$
\Psi_i(X_{i\cdot}) = \sum_{j=1}^m g_{ij}(x_{ij}) \qquad (i=1,\dots,n)
$$
\item[(iii)] primary constraints given by
$$
\sum_{i=1}^n x_{ij} = c_j \quad (j=1,\dots,m), \qquad x_{ij} \geq 0 \quad (i=1,\dots,n,\; j = 1,\dots,m)
$$
where $c_1,\dots,c_m$ are given positive constants;
\item[(iv)] secondary constraints given by
$$
\sum_{j=1}^m x_{ij} = r_i \qquad (i=1,\dots,n)
$$
where $r_1,\dots,r_n$ are given positive constants.
\end{itemize}
\end{definition}
\vskip2mm
A specification of the problem above can be given in terms of the triple $(g,c,r)$, where $g = (g_{ij}(\cdot))_{i=1,\dots,n;\,j=1,\dots,m}$ is the matrix of reward functions, $c \in \mR^m_{++}$ is the vector of given column sums, and $r \in \mR^n_{++}$ is the vector of given row sums.\footnote{The notation $\mR_{++}$ is used in this paper to indicate the set of strictly positive real numbers. The set of non-negative real numbers is denoted by $\mR_+$.}
The functions $g_{ij}: \mR_+ \rightarrow \mR \cup \{-\infty\}$ will be assumed to be twice continuously differentiable (for convenience), strictly increasing, and concave. An obvious necessary condition for a MOMA problem to be feasible is that the sum of the row sums equals the sum of the column sums:
\begin{equation}\label{gfc}
\sum_{i=1}^n r_i = \sum_{j=1}^m c_j.
\end{equation}
This condition will be referred to as the \emph{global feasibility condition}. We take it as a standing assumption in all MOMA problems to be discussed below.

\begin{definition} \label{balanced}
An $n \times m$ matrix $X$ is said to provide a \emph{balanced solution} of a given MOMA problem if it satisfies the following two conditions.
\begin{itemize}
\item[(i)] The decision matrix $X$ is Pareto efficient with respect to the objective functions $\Psi_i$ and the primary constraints of the MOMA problem. In other words, there does not exist a matrix $\tilde{X}$ such that $\tilde{X}$ satisfies the primary constraints, $\Psi_i(\tilde{X}_{i\cdot}) \geq \Psi_i(X_{i\cdot})$ for all $i=1,\dots,n$, and $\Psi_i(\tilde{X}_{i\cdot}) > \Psi_i(X_{i\cdot})$ for at least one $i \in \{1,\dots,n\}$.
\item[(ii)] The matrix $X$ satisfies the secondary constraints of the MOMA problem.
\end{itemize}
\end{definition}

\begin{remark}
The formulation as given above refers to the point of view of maximization. An analogous formulation can be given for minimization problems. In the case of minimization, the decision variables represent quantities that are preferred to be small, at least from the perspective of the $i$-th objective, such as ``payment to be made by agent $i$ if outcome $j$ occurs'' or ``part of chore $j$ to be carried out by agent $i$''\!. For such problems, natural assumptions on the functions $g_{ij}$ (cost functions in this case, rather than reward functions) are that they are increasing and convex.
\end{remark}

\begin{remark}
In the above, we work with \emph{unweighted} sums. A similar problem with weighted row and column sums
$$
\sum_{i=1}^n p_i x_{ij} = c_j , \qquad \sum_{j=1}^m q_j x_{ij} = r_i
$$
($p_i > 0$, $q_j > 0$ for all $i$ and $j$) can be transformed to a problem with unweighted sums by the substitutions
$$
\tilde{x}_{ij} = p_i q_j x_{ij}, \quad \tilde{c}_j = q_j c_j, \quad \tilde{r}_i = p_i r_i.
$$
In this sense, the use of unweighted sums entails no loss of generality. On the other hand, it might be argued that such usage may not offer the best preparation for the continuous case. For simplicity of notation, however, in this paper unweighted sums will be used.
\end{remark}

\begin{remark} \label{inv}
If the reward functions $g_{ij}(x_{ij})$ are modified to
$$
\tilde{g}_{ij}(x_{ij}) = s_i g_{ij}(x_{ij}) + t_{ij}
$$
where $s_i > 0$ and $t_{ij}$ are constants ($i=1,\dots,n$; $j=1,\dots,m$), then any balanced solution of the original problem is also a balanced solution of the modified problem, and vice versa. This is immediate from the definition of balanced solutions, and from the additive separability of the objective functions.
\end{remark}

\begin{remark}
If $X$ is a balanced solution of the MOMA problem specified by $(g,c,r)$, and $s$ is any positive number, then $\tilde{X} := X/s$ is a balanced solution of the problem specified by $(\tilde{g},r/s,c/s)$ with $\tilde{g}_{ij}(x_{ij}):=g_{ij}(sx_{ij})$, and vice versa.
Under the global feasibility constraint, it is therefore no restriction of generality to assume that $\sum_i r_i = \sum_j c_j = 1$.
\end{remark}

A \emph{linear} MOMA problem is one in which the reward functions are linear, i.e.\ $g_{ij}(x_{ij}) = b_{ij}x_{ij}$ with constant coefficients $b_{ij}>0$.

\subsection{Examples} \label{examples}

Three examples are given of situations in which MOMA problems, and in particular linear MOMA problems, arise naturally. These examples refer to task allocation, statistical classification, and risk sharing respectively.

\begin{example} \label{task}
Mathematical formalization of the problem of chore division appears to have been first undertaken by Martin Gardner \cite{Gardner}. The problem is naturally formulated as a minimization problem, and the exogenous rights that were mentioned in Section \ref{introduction} are in this case exogenous obligations. Consider a situation in which $n$ agents must carry out $m$ different tasks. For an example that may be familiar to many readers, one can think of $m$ courses, which need to be taught by $n$ available lecturers. The size of the contribution to be made by each of the agents is supposed to be prefixed; in the example of course allocation, this corresponds to a teaching load which is given in advance for each lecturer. The contribution size is computed by means of a linear formula, in which each of the tasks has its own nominal weight. The decision variable $x_{ij}$ represents the fraction of task $j$ that is assigned to agent $i$. The objective functions (cost in this case) are the disutilities of the agents that result from their package of assigned tasks. The primary constraints express that the task fractions must be non-negative, and that each task must be fulfilled completely. The secondary constraints correspond to the predetermined magnitudes of the contributions of the agents. The column sums are all 1 (i.e.\ 100\%), and the row sums are given by the teaching loads. In this way, a MOMA problem is defined. If the agents' disutilities depend linearly on the task fractions, then we have a linear MOMA problem.
\end{example}

\begin{example} \label{classification}
Statistical classification is often considered as part of the field of machine learning \cite{Michie}. The central problem in this area
is to place objects into mutually exclusive categories, on the basis of observed characteristics. Examples of such situations are numerous and include, for instance, various types of diagnosis. To stay within the discrete context, it will be assumed here that only a finite number $m$ of different characteristics can be observed (in other words, the ``feature space'' is finite), and that there is a finite number $n$ of categories. The probability that a given object has characteristic $j$ and belongs to category $i$ will be denoted by $p_{ij}$. It is assumed that these probabilities are known. The situation in which $n=2$ corresponds to the problem of testing a simple null hypothesis against a simple alternative. Already in this case, the multiple-objective nature of the problem is apparent; the probability of a type-I error must be weighed against the probability of a type-II error. For a general number of categories, we can define with each category an objective function which is given by the probability of correct classification for an object in that category. Decisions may be randomized; we denote by $x_{ij}$ the probability by which the classifier will place an object with characteristic $j$ into category $i$. A problem of MOMA type arises when the percentage of objects to be placed in each of the categories is determined in advance. Such a prescription may be reasonable in circumstances in which classification is carried out for the purpose of treatment (for instance, medical treatment, or participation in an educational program), and there are capacity constraints for each form of treatment. The decision variables are given by the numbers $x_{ij}$. The probability that an object will be classified into category $i$, given that it indeed belongs to that category, is given by $\sum_{j=1}^m p_{ij}x_{ij}/\sum_{j=1}^m p_{ij}$ ($i=1,\dots,n$). These probabilities may be taken as objective functions; in the case of two categories, they are known as \emph{specificity} and \emph{sensitivity}, and are in a one-to-one relationship with the probabilities of type-I errors and type-II errors. The primary constraints are given by the non-negativity constraints on the decision variables $x_{ij}$, and by the condition that all objects with a given feature $j$ should be classified into one of the categories ($\sum_{i=1}^n x_{ij} = 1$ for all $j$). The secondary constraints correspond to the prescribed percentages of objects to be placed in each category. Since the objective functions are linear in this case, we have a linear MOMA problem.
\end{example}

\begin{example} \label{revenue}
Risk sharing is a classical topic in actuarial science (see for instance \cite{Rotar}) and remains a very active field of research. While part of the literature on risk sharing is focused on information asymmetries and behavioral effects which we do not consider here, the problem has been formulated also in the MOMA framework \cite{Buehlmann79}. For a simple illustration,
consider $n$ individuals who form a group that will carry out a project with an uncertain payoff (say, a fishing expedition). For the purpose of discreteness, assume that there are $m$ possible outcomes. The variable $x_{ij}$ denotes the share of revenue that agent $i$ will receive in case outcome $j$ materializes. The right-hand side of the column constraint $\sum_{i=1}^n x_{ij} = c_j$ refers to the total amount that is available in outcome $j$. It would also be reasonable to impose the non-negativity constraint $x_{ij} \geq 0$. The decision variables $x_{ij}$ are to be determined at the beginning of the project, before the ``veil of uncertainty'' has been lifted. To express a notion of fairness, the group might agree on state prices $q_j$ for each of the possible outcomes, and fix a value for the allotment to each agent, i.e.\ $\sum_{j=1}^m q_j x_{ij} = v_i$, where the numbers $v_i$ are chosen in relation to the contributions (in terms of effort and money) made by participants. If the participants are risk neutral, i.e.\ their objectives are given by their subjective expected payoffs (in other words, $\Psi_i(X_{i\cdot}) = \sum_{j=1}^m p_{ij}x_{ij}$ where the coefficients $p_{ij}$ refer to subjective probabilities), then we obtain a linear MOMA problem. In a more actuarial context, one might consider a group of companies which agree to a mutual reinsurance agreement with respect to claims that will come in during a given period. Fairness may then for instance be expressed on the basis of the expected claim sizes of the portfolios that the companies bring into the pool. In this case, the problem is more naturally formulated as a minimization problem, since the agents prefer smaller realized claims to larger ones.
\end{example}

\subsection{Characterization of balanced solutions} \label{character}

Consider a linear MOMA problem given by
\begin{subequations} \label{linmoma}
\begin{align}
\text{objectives} \quad & \sum_{j=1}^m b_{ij}x_{ij} & (i=1,\dots,n) \hspace{21.4mm}& \hspace{2cm} \\[2mm]
\text{primary constraints} \quad & \sum_{i=1}^n x_{ij} = c_j & (j=1,\dots,m) \hspace{20mm} \\[2mm]
& \hspace{6.2mm} x_{ij} \geq 0 & (i=1,\dots,n;\; j = 1,\dots,m) \\[2mm]
\text{secondary constraints} \quad & \sum_{j=1}^m x_{ij} = r_i & (i=1,\dots,n) \hspace{21.6mm}
\end{align}
\end{subequations}
It will be assumed throughout that the coefficients $b_{ij}$ and the prescribed row and column sums $r_i$ and $c_j$ are positive, and that the global feasibility condition (\ref{gfc}) is satisfied. Under these conditions, the feasible set in decision space corresponding to the primary constraints is nonempty. The expressions above can refer to maximization as well as to minimization. In both cases, the coefficients $b_{ij}$ are supposed to be positive. Of course, the two cases differ by a reversal of the direction of the inequality used in the definition of Pareto efficiency.

It follows from a theorem of Isermann \cite{Isermann74} (see also \cite[Thm.\,6.6]{Ehrgott}) that a matrix $X$ is a Pareto efficient solution of the problem consisting of the objectives and the primary constraints, taken as a maximization problem, if and only if it is a solution of a weighted-sum optimization problem of the form
\begin{subequations} \label{ws}
\begin{align}
\hspace{2cm} \text{maximize} \quad & \sum_{i=1}^n \alpha_i \sum_{j=1}^m b_{ij}x_{ij}\\[2mm]
\text{subject to} \quad & \sum_{i=1}^n x_{ij} = c_j & (j=1,\dots,m) \hspace{20mm} \\[2mm]
& x_{ij} \geq 0 & (i=1,\dots,n;\; j = 1,\dots,m) & \hspace{2cm}
\end{align}
\end{subequations}
where $\alpha_1,\dots,\alpha_n$ are \emph{positive} parameters. As a result, we obtain the following characterization of balanced solutions of linear MOMA problems.

\begin{lemma} \label{charlemma}
A matrix $X \in \mR_+^{n \times m}$ is a balanced solution of the linear \ed{MOMA} problem specified by \ed{(\ref{linmoma})}, taken as a maximization problem, if and only if there exist positive numbers $\alpha_1,\dots,\alpha_n$ (weights) and positive numbers $\beta_1,\dots,\beta_m$ (dual variables relating to equality constraints), such that the following conditions are satisfied:
\begin{subequations} \label{char}
\begin{align}
\big(x_{ij}=0 \text{ and } \alpha_i b_{ij} \leq \beta_j\big) \text{ or }
\big(x_{ij} \geq 0 \text{ and } \alpha_i b_{ij} = \beta_j\big)  \quad  (i=1,\dots,n;\; j = 1,\dots,m) \label{momacomp} \\[2mm]
\sum_{i=1}^n x_{ij} = c_j \quad (j=1,\dots,m) \hspace{4cm} \label{cc} \\
\sum_{j=1}^m x_{ij} = r_i \quad (i=1,\dots,n). \hspace{4.05cm}
\end{align}
\end{subequations}
\end{lemma}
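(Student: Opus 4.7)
The plan is to combine Isermann's theorem, which was invoked in the paragraph immediately preceding the lemma, with the KKT optimality conditions for the resulting weighted-sum linear program (\ref{ws}). Isermann's theorem asserts that Pareto efficiency under the primary constraints of (\ref{linmoma}) is equivalent to optimality in a weighted-sum LP with some strictly positive weight vector $\alpha = (\alpha_1,\dots,\alpha_n)$; condition (ii) of Definition \ref{balanced} then appends the secondary (row-sum) constraints. Since the feasible set of (\ref{ws}) is a nonempty compact polytope (nonemptiness follows from the global feasibility condition, boundedness from non-negativity together with the column-sum equalities) and the objective is linear, the KKT conditions are both necessary and sufficient for optimality.

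For the forward direction, let $X$ be balanced, and let $\alpha$ be the positive weight vector supplied by Isermann's theorem. Introducing multipliers $\beta_j \in \mR$ for the column-sum equalities and $\mu_{ij} \geq 0$ for the non-negativity constraints, stationarity in $x_{ij}$ reads $\alpha_i b_{ij} - \beta_j + \mu_{ij} = 0$, and complementary slackness gives $\mu_{ij}x_{ij}=0$. Eliminating $\mu_{ij} = \beta_j - \alpha_i b_{ij} \geq 0$ produces precisely the disjunctive conditions in (\ref{momacomp}); the column-sum equations come from primal feasibility of (\ref{ws}), and the row-sum equations from condition (ii) of Definition \ref{balanced}. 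Strict positivity of $\beta_j$ follows because $c_j>0$ forces at least one index $i$ with $x_{ij}>0$; for that pair, complementarity gives $\beta_j = \alpha_i b_{ij} > 0$, and $\alpha_i,b_{ij}>0$.

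For the converse, if a triple $(X,\alpha,\beta)$ with positive $\alpha$ and $\beta$ satisfies (\ref{char}), then setting $\mu_{ij} := \beta_j - \alpha_i b_{ij} \geq 0$ recovers a complete set of KKT data for (\ref{ws}) at the point $X$, so $X$ is optimal for that LP and hence Pareto efficient by Isermann's theorem; the secondary constraints hold by assumption, so $X$ is balanced. I do not anticipate any genuine obstacles: the points where care is needed are the bookkeeping on signs (positivity of $\alpha_i$ from Isermann, positivity of $\beta_j$ as above) and the invocation of LP KKT with its full ``if and only if'' strength, which is available here because the constraints are affine and a standard constraint qualification (linearity of constraints) is trivially satisfied.
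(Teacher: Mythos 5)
Your proposal is correct and follows essentially the same route as the paper: invoke Isermann's theorem to reduce Pareto efficiency under the primary constraints to optimality in the weighted-sum LP (\ref{ws}) with strictly positive weights, then characterize that optimality by LP duality/complementary slackness (your explicit KKT multipliers $\mu_{ij}$ are just a more spelled-out form of the same thing), and finally observe that positivity of the $\beta_j$ is forced by $b_{ij}>0$. The only cosmetic difference is that the paper deduces $\beta_j>0$ directly from $\alpha_i b_{ij}\leq\beta_j$ with $\alpha_i b_{ij}>0$, whereas you route it through $c_j>0$; both are valid.
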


\begin{proof}
The weighted-sum optimization problem (\ref{ws}) is a linear program with equality and inequality constraints. By linear programming duality (see for instance \cite{Schrijver98}), a matrix $X$ is optimal for (\ref{ws}) if and only if there exist $\beta_j \in \mR$ such that the complementary slackness conditions (\ref{momacomp}) and the column constraints (\ref{cc}) are satisfied. The set of all Pareto efficient solutions of the multi-objective linear program specified by the objective functions and the primary constraints of (\ref{linmoma}) is therefore given by matrices $X \in \mR_+^{n \times m}$ for which there exist $\alpha_i > 0$ and $\beta_j \in \mR$ such that (\ref{momacomp}) and (\ref{cc}) hold. By the standing assumption that $b_{ij}>0$, this can only happen for dual variables which are positive.
\qed
\end{proof}

\begin{remark}
An analogous statement to Lemma \ref{charlemma} can be proven in the case of minimization; the only difference lies in the direction of the inequality in the first term of the disjunction in (\ref{momacomp}). Because of the constraints $x_{ij} \geq 0$ and $\sum_{i=1}^n x_{ij} = c_j > 0$, for every index $j$ there must be a least one index $i$ such that $x_{ij} > 0$; for this index $i$, the equality $\alpha_i b_{ij} = \beta_j$ holds. It follows that, also in the MOMA minimization case, the dual variables $\beta_j$ are positive.
\end{remark}

\section{Connection to optimal transport} \label{connection}

MOMA maximization problems with increasing and strictly concave utility functions were considered by \cite{Gale79}. It is shown in this paper that there is a connection between balanced solutions of such problems and optimal solutions of related single-objective matrix allocation (SOMA) problems of the form
\begin{subequations} \label{soma}
\begin{align}
\sum_{i=1}^n \sum_{j=1}^m f_{ij}& (x_{ij}) \rightarrow \text{ max} \\
\text{subject to} \quad \sum_{i=1}^n x_{ij} & = c_j \quad (j = 1,\dots,m) \\
\sum_{j=1}^m x_{ij} & = r_i \quad (i = 1,\dots,n) \\
x_{ij} & \geq 0.
\end{align}
\end{subequations}
Specifically, under regularity conditions, it was shown by \cite{Gale79} that a matrix $X$ is a balanced solution of the MOMA problem of Def.\,\ref{moma} if and only if $X$ is an optimal solution of the SOMA problem (\ref{soma}), where the reward functions $f_{ij}$ of the SOMA problem relate to the utility functions $g_{ij}$ of the MOMA problem by the prescription
\begin{equation} \label{momasoma}
f_{ij}'(x) = \log g_{ij}'(x).
\end{equation}
In other words, if one starts with the utility functions of the SOMA problem, then the utility functions of the corresponding MOMA problem are found from these by executing the following operations in succession: differentiation, exponentiation, integration. If one applies this procedure to a linear function, then the result is again linear, since the exponential of a constant is a constant. This suggests that linear MOMA problems should correspond to SOMA problems of the form (\ref{moma}) with linear reward functions. Such linear SOMA problems are known as \emph{optimal transport} problems. In this section, we confirm the conjecture that there is a one-to-one relationship between solutions of the discrete optimal transport problem on the one hand, and solutions of related linear MOMA problems on the other hand.

\begin{remark}
For MOMA minimization problems with increasing and strictly convex disutility functions, a result that is analogous to the one in \cite{Gale79} can be proven. The relation between the cost functions $g_{ij}(x)$ in the MOMA minimization problem and the cost functions $f_{ij}(x)$ in the SOMA minimization problem is given by the same formula (\ref{momasoma}) as in the maximization case.
\end{remark}

\begin{remark}
In the single-objective case, maximization and minimization problems can be simply converted into each other by the usual device of changing the sign of the objective function. The same is not true for MOMA problems in which we are looking for balanced solutions, since here we require that objective functions are increasing, both in the case of maximization and in the case of minimization. Instead, one can make use of the connection via the equivalent SOMA problem as specified in (\ref{momasoma}). It is seen that a matrix $X$ is a balanced solution of the MOMA maximization problem with reward functions $g_{ij}$ if and only if $X$ is a balanced solution of the MOMA minimization problem with cost functions $\hat{g}_{ij}$ (and the same row and column constraints), when the functions $g_{ij}$ and $\hat{g}_{ij}$ are related via
\begin{equation} \label{conjug}
\hat{g}'_{ij}(x) = \frac{1}{g'_{ij}(x)}\,.
\end{equation}
The conjugation defined by (\ref{conjug}) relates for instance $\log x$ (increasing, concave) to $\frac{1}{2}x^2$ (increasing, convex). This means that a logarithmic MOMA maximization problem is equivalent to a quadratic MOMA minimization problem. For another example, the increasing and concave power functions $x^{1-\gamma}/(1-\gamma)$ with $\gamma > 0$, popular in utility theory, are linked by (\ref{conjug}) to the increasing and convex power functions $x^{1+\gamma}/(1+\gamma)$. A linear MOMA maximization problem with coefficients $b_{ij}$ is equivalent to a linear MOMA minimization problem with coefficients $b_{ij}^{-1}$.
\end{remark}

The optimal transport problems that will be considered below, in the maximization case, are of the discrete form
\begin{subequations} \label{ot}
\begin{align}
\sum_{i=1}^n \sum_{j=1}^m a_{ij}& x_{ij} \rightarrow \text{ max} \\
\text{subject to} \quad \sum_{i=1}^n x_{ij} & = c_j \quad (j = 1,\dots,m) \\
\sum_{j=1}^m x_{ij} & = r_i \quad (i = 1,\dots,n) \\
x_{ij} & \geq 0.
\end{align}
\end{subequations}
The given row sums $r_i$ and column sums $c_j$ are all supposed to be positive, and the global feasibility condition $\sum_i r_i = \sum_j c_j$ is supposed to be satisfied. Under these conditions, the feasible set is not empty; for instance, we can take $x_{ij} = r_ic_j/s$ where $s:= \sum_i r_i = \sum_j c_j$.
The problem (\ref{ot}) is known as the Monge-Kantorovich problem. It is a linear programming problem; its dual is given by
\begin{subequations}
\begin{align}
\sum_{i=1}^n &\, \lambda_i r_i + \sum_{j=1}^m \mu_j c_j \rightarrow \text{ min} \\
\text{subject to} \quad & \lambda_i + \mu_j \geq a_{ij} \quad (i = 1,\dots,n; \; j = 1,\dots,m).
\end{align}
\end{subequations}
\begin{theorem}
A matrix $X$ is a balanced solution of the linear \ed{MOMA} problem \ed{(\ref{linmoma})}, taken as a maximization problem, if and only if $X$ is an optimal solution of the optimal transport problem \ed{(\ref{ot})} with the same row and column constraints, and
\begin{equation} \label{log}
a_{ij} = \log b_{ij}.
\end{equation}
\end{theorem}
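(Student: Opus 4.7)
My plan is to apply Lemma \ref{charlemma} directly and match its optimality conditions with the LP optimality conditions of the optimal transport problem (\ref{ot}) via a logarithmic change of dual variables. The whole argument amounts to observing that the characterization of balanced solutions is, up to substitution $\alpha_i \leftrightarrow e^{-\lambda_i}$, $\beta_j \leftrightarrow e^{\mu_j}$, identical to the Kantorovich duality conditions.

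First I would invoke Lemma \ref{charlemma}: $X \in \mR_+^{n\times m}$ is a balanced solution of (\ref{linmoma}) (as a maximization problem) if and only if $X$ satisfies the row constraints $\sum_j x_{ij}=r_i$, the column constraints $\sum_i x_{ij}=c_j$, and there exist positive weights $\alpha_1,\dots,\alpha_n$ and positive dual variables $\beta_1,\dots,\beta_m$ with $\alpha_i b_{ij} \leq \beta_j$ for all $i,j$, and equality whenever $x_{ij}>0$. Second, I would write down the analogous optimality conditions for (\ref{ot}): by LP duality with the dual already exhibited in the text, a feasible $X$ is optimal for (\ref{ot}) if and only if there exist real numbers $\lambda_1,\dots,\lambda_n$ and $\mu_1,\dots,\mu_m$ with $\lambda_i + \mu_j \geq a_{ij}$ everywhere and $\lambda_i + \mu_j = a_{ij}$ whenever $x_{ij}>0$.

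Third, I would establish the correspondence. Set $\lambda_i := -\log \alpha_i$ and $\mu_j := \log \beta_j$; this is a bijection between positive $(\alpha,\beta) \in \mR_{++}^{n+m}$ and real $(\lambda,\mu) \in \mR^{n+m}$. Taking logarithms of $\alpha_i b_{ij} \leq \beta_j$ and using $a_{ij} = \log b_{ij}$ yields $a_{ij} \leq \lambda_i + \mu_j$, and the equality case transports across in the same way. The row and column feasibility conditions appear identically in both problems, so they match automatically. Combining everything gives the biconditional.

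The only point requiring any care — and hardly an obstacle — is matching the range of the multipliers on the two sides: MOMA requires $\alpha_i,\beta_j > 0$, whereas OT allows $\lambda_i,\mu_j$ to be any reals. The exponential/logarithm bijection handles this precisely because $\mR$ and $\mR_{++}$ are in smooth one-to-one correspondence under $t \mapsto e^t$. Note that the transformation is meaningful only because the MOMA dual variables are strictly positive, a fact that Lemma \ref{charlemma} guarantees under the standing assumption $b_{ij}>0$. No boundary or degeneracy issues arise, and the proof is complete.
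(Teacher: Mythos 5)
Your proposal is correct and follows essentially the same route as the paper: both arguments combine Lemma \ref{charlemma} with complementary slackness for the Kantorovich LP and identify the two sets of optimality conditions via the substitution $\alpha_i = e^{-\lambda_i}$, $\beta_j = e^{\mu_j}$ under $a_{ij} = \log b_{ij}$. Your remark that the strict positivity of $\alpha_i,\beta_j$ (guaranteed by the lemma) is what makes the logarithmic change of variables a genuine bijection is exactly the point the paper relies on implicitly.
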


\begin{proof}
By linear programming duality, which in this case is Monge-Kantorovich duality (see for instance \cite[Thm.\,2.2]{Galichon}), a matrix $X \in \mR^{n \times m}_+$ is a solution of the optimal transport problem (\ref{ot}) if and only if there are $\lambda_i \in \mR$ ($i = 1,\dots,n$) and $\mu_j \in \mR$ ($j=1,\dots,m$) such that the following conditions hold:
\begin{multline}
\big(x_{ij}=0 \text{ and } a_{ij} \leq \lambda_i + \mu_j\big)
\text{ or } \big(x_{ij} \geq 0 \text{ and } a_{ij} = \lambda_i + \mu_j\big) \qquad (i=1,\dots,n;\; j = 1,\dots,m), \\
\sum_{i=1}^n x_{ij} = c_j \quad (j=1,\dots,m), \qquad
\sum_{j=1}^m x_{ij} = r_i \quad (i=1,\dots,n). \hspace{2cm}
\end{multline}
Under (\ref{log}), these conditions are identical to the conditions (\ref{char}), since we can take $\alpha_i = \exp(-\lambda_i)$ and $\beta_j = \exp \mu_j$. The statement of the theorem follows.
\qed
\end{proof}

The value of the function $\sum_i \sum_j (\log b_{ij}) x_{ij}$ for general $x_{ij}$ cannot be determined on the basis of information concerning only the values of the functions $\sum_j b_{ij}x_{ij}$. This shows that, even in the linear case, the relation between MOMA problems and SOMA problems as reflected in the theorem above cannot be viewed as a scalarization procedure. The term ``social welfare function'' used by
Gale and Sobel \cite{Gale79} must be interpreted carefully; it should be noted that in the single-objective problem, the welfare is served of agents who are different from those that appear in the corresponding multi-objective problem.

\begin{remark}
The relation (\ref{log}) can be used to translate properties from linear SOMA problems to linear MOMA problems and vice versa. For instance, it is well known that the optimal transport problem can be solved very easily by a greedy algorithm when the weight matrix satisfies the \emph{Monge property}, which requires that $a_{i_1 j_1} + a_{i_2 j_2} \geq a_{i_1 j_2} + a_{i_2 j_1}$ for all $1 \leq i_1 < i_2 \leq n$ and $1 \leq j_1 < j_2 \leq m$; see \cite{Burkard} for an extensive discussion.\footnote{The direction of the inequality is reversed here with respect to the convention in the cited paper, because we consider a maximization problem, rather than a minimization problem as in \cite{Burkard}.} The corresponding condition for the related multi-objective problem is that the determinants of all 2-by-2 submatrices of the coefficient matrix $(b_{ij})$ should be non-negative.
\end{remark}

\section{Iterative solution method} \label{iterative}

\subsection{Direct iteration}

For strictly concave single-objective matrix allocation problems of the form (\ref{soma}), there is a natural iterative solution method, which may be viewed as an application of the alternating projections method due to Bregman \cite{Bregman}. The idea of solving matrix allocation problems by an iterative process that switches between row and column constraints already occurs in the early contributions of Kruithof \cite{Kruithof} and Deming and Stephan \cite{Deming}.
The algorithm proposed in \cite{Kruithof} and \cite{Deming} is known as the Iterative Proportional Fitting Procedure (IPFP), or also as the RAS method. The iterative solution method for SOMA maximization problems that we describe below is a direct generalization of IPFP. For simplicity, it is assumed that the reward functions $f_{ij}(x)$ in (\ref{soma}) satisfy $\lim_{x \downarrow 0} f'_{ij}(x) = \infty$ and $\lim_{x \rightarrow \infty} f_{ij}(x) = -\infty$; the corresponding conditions for the MOMA reward functions related via (\ref{momasoma}) are usually referred to as Inada conditions. These conditions imply, in particular, that the non-negativity constraints are redundant (optimal solutions must be strictly positive).

The standard Lagrange optimization method for the problem (\ref{soma}) gives rise to the equations
\begin{equation}
f_{ij}'(x_{ij}) = \lambda_i + \mu_j \qquad (i=1,\dots,n;\; j = 1,\dots,m)
\end{equation}
where $\lambda_i$ ($i=1,\dots,n$) and $\mu_j$ ($j=1,\dots,m$) are Lagrange multipliers corresponding to row and column constraints respectively. Under the strict concavity assumptions on the functions $f_{ij}$, the derivatives $f_{ij}'$ are strictly decreasing. Consequently, these functions have inverses, which will be denoted by $F_{ij}$. Under the Inada conditions, the inverse functions $F_{ij}$ are defined on all of the real line. We can then rewrite the row and column constraints in terms of the multipliers $\lambda_i$ and $\mu_j$:
\begin{subequations}
\begin{align}
\sum_{i=1}^n F_{ij}(\lambda_i + \mu_j) & = c_j \quad (j = 1,\dots,m) \label{iter1} \\
\sum_{j=1}^m F_{ij}(\lambda_i + \mu_j) & = r_i \quad (i = 1,\dots,n) \label{iter2}
\end{align}
\end{subequations}
Starting with an initial guess for the values of $\lambda_i$, one can use the equations (\ref{iter1}) to compute corresponding multipliers $\mu_j$. Each multiplier $\mu_j$ appears in exactly one of the equations. Depending on the nature of the functions $F_{ij}$, it may be possible to write down the answer in analytic form, but even if this is not the case, the problem of finding $\mu_j$ only calls for determining the root of a scalar monotone function. When values for $\mu_j$ have been found in this way, one can use the equations (\ref{iter2}) to find updated values of the multipliers $\lambda_i$. Again the equation system is diagonal in the sense that each $\lambda_i$ occurs in exactly one of the equations. After solving for $\lambda_i$ ($i=1,\dots,n$), one can determine updates for $\mu_j$ ($j = 1,\dots,m$), and so on.

Convergence of the iterative proportional fitting procedure and of its generalization to infinite dimensions has been studied extensively; see for instance \cite{Brown,Sinkhorn,Ireland,Fienberg,Csiszar,Rueschendorf,Bauschke}. Motivated by a MOMA problem in risk sharing, Pazdera et al.\ \cite{PSW1} proved convergence of the iterative procedure for a broad class of strictly concave utilities, on the basis of a nonlinear version of the Perron-Frobenius theorem due to \cite{Oshime}.
One may write down an analogous iterative procedure for optimal transport problems, or equivalently for their multi-objective counterparts. While in this case the objective functions are linear rather than strictly concave, the iterative procedure can still be followed; however, as will be seen below, the procedure is not effective as a solution method. Nevertheless, the iteration in the linear case is worked out below, because it sheds light on the behavior of regularized versions.

We follow the MOMA viewpoint, which leads naturally to a multiplicative formulation. Iteration takes place between the utility weights $\alpha_i$ and the multipliers $\beta_j$. Starting with positive initial values for the weights $\alpha_i$ (for example, one might choose $\alpha_i = 1$ for all $i$), the multipliers $\beta_j$ are to be found from the conditions for optimality of the weighted-sum problem (\ref{momacomp}) and the column constraints. It is easily verified that, given a vector of weights $\alpha$, the vector of multipliers $\beta$ is such that (\ref{momacomp}) as well as the column constraints are satisfied for some $x_{ij}$ if and only if
$\beta_j = \max_i \alpha_i b_{ij}$ for $j=1,\dots,m$.
Likewise, given a vector of multipliers $\beta$, the vector of weights $\alpha$ is such that (\ref{momacomp}) as well as the row constraints are satisfied for some $x_{ij}$ if and only if
$\alpha_i = \min_j \beta_j/b_{ij}$ for $i=1,\dots,n$.
This leads to the iteration $\alpha\kn = \pOT(\alpha\ki)$, where the mapping $\pOT$ from $\mR^n_+$ into itself is defined by $\pOT(\alpha) = \hat{\alpha}$ with
\begin{equation} \label{pot}
\beta_j = \max_i \alpha_i b_{ij} \quad (j=1,\dots,m), \qquad
\hat{\alpha}_i = \min_j \beta_j/b_{ij} \quad (i=1,\dots,n).
\end{equation}
\vskip2mm\noindent
The behavior of this iterative procedure may be understood as follows. The update $\alpha_i^{(k+1)}$ is such that
$$
\min_j \frac{\max_i \alpha_i^{(k)} b_{ij}}{\alpha_i^{(k+1)}b_{ij}} = 1.
$$
In other words, the new vector of weights $\alpha_i^{(k+1)}$ is determined such that
$
\alpha_i^{(k+1)}b_{ij} \leq \max_i \alpha_i^{(k)} b_{ij}
$
for all $j$, with equality for at least one value of $j$. The quantity $\max_i \alpha_i^{(k)} b_{ij}$ may be called the \emph{column maximum} for column $j$ in the row-scaled matrix with elements $\alpha_i^{(k)} b_{ij}$. If row $i$ in this matrix does not contain an element whose value is equal to the column maximum, then the row will receive a renewed scaling by multiplication by $\alpha_i^{(k+1)}$, so that the column maximum will be achieved. The renewed scaling is the minimal one which has this effect, and therefore it does not affect the value of any of the column maxima. After one iteration, each row in the row-scaled matrix contains a column maximal element, and the iteration arrives at a fixed point.

While convergence therefore takes place after at most one step, there are many fixed points. Any row scaling which is such that every row contains a column maximal element corresponds to a fixed point of the iteration. Moreover, the decision matrix cannot be written as a function of the weights $\alpha_i$ and the multipliers $\beta_j$, in contrast to the strictly concave case.

\subsection{Regularization}

A possible solution to remedy the unsatisfactory behavior of the direct iterative algorithm for the optimal transport problem as discussed above is to apply \emph{regularization}. Here, we consider an additive regularization of the optimal transport problem, and we aim at finding the corresponding regularization in the multi-objective framework. In order to regularize, one can replace the linear reward functions $a_{ij}x_{ij}$ by concavified versions
\begin{equation} \label{somareg}
f_{ij}(x_{ij},\eta) = a_{ij}x_{ij} + \eta f_1(x_{ij})
\end{equation}
where the regularization parameter (or ``temperature'') $\eta$ is a small positive number, and the regularization function $f_1(x)$ is strictly concave. The optimization problem defined by these regularized reward functions is strictly concave, and the relation (\ref{momasoma}) can be used to determine the corresponding multi-objective problem. Defining the function $g_1(x)$ (up to an irrelevant constant) from the
regularization function $f_1(x)$ by $\log g_1'(x) = f_1'(x)$ as in (\ref{momasoma}), one finds that the MOMA regularization corresponding to (\ref{momareg}) is given by
\begin{equation} \label{momareg}
g_{ij}(x_{ij},\eta) = b_{ij} g_\eta(x_{ij})
\end{equation}
where $b_{ij} = \exp(a_{ij})$ and the function $g_\eta(x)$ satisfies
\begin{equation} \label{momapar}
\log g_\eta'(x) = \eta \log g_1'(x).
\end{equation}
The relation (\ref{momapar}) has a straightforward interpretation in terms of the Arrow-Pratt coefficient of risk aversion. The Arrow-Pratt risk aversion function $r_\eta(x)$ associated to the utility function $g_\eta(x)$ is given by $r_\eta(x) = - g''_\eta(x)/g'_\eta(x)$. With this definition, the relation (\ref{momapar}) is equivalent (up to an irrelevant constant) to
\begin{equation} \label{ra}
r_\eta(x) = \eta r_1(x).
\end{equation}
This shows a rather natural relation between the risk aversion functions corresponding to $g_\eta$ and $g_1$.

A regularization function that is frequently chosen for the optimal transport problem is the entropic function given by
\begin{equation}
f_1(x) = - x\log x
\end{equation}
\cite{Cuturi,Benamou,Peyre,Galichon}. Up to a positive factor, which we can ignore on the basis of Remark \ref{inv}, the corresponding regularization of the MOMA problem is $g_1(x) = 1/x$. From (\ref{momapar}), it follows that the MOMA regularization in this case can be written as
\begin{equation} \label{aij}
g_{ij}(x_{ij};\eta) = b_{ij} \, \frac{x_{ij}^{1-\eta}}{1-\eta}\,.
\end{equation}
The function that appears here is known as \emph{power utility}, or also as \emph{isoelastic utility} or \emph{CRRA utility} (constant relative risk aversion). It is seen, therefore, that \emph{entropic} regularization of the optimal transport problem corresponds to \emph{isoelastic} regularization of the corresponding multi-objective matrix allocation problem.

In the case of entropic regularization of optimal transport problems, or, equivalently, isoelastic regularizations of linear MOMA problems, the iterative algorithm based on (\ref{iter1}--\ref{iter2}) specializes as follows. Working from the multiplicative form, the $k$-th iterate $x\ki_{ij}$ is determined in terms of the corresponding iterates of the multiplicative parameters by\footnote{Usage of the notation $\alpha$ and $\beta$ here is in line with the notation in Section \ref{character}.}
$$
\alpha_i\ki g'_{ij} \big(x\ki_{ij}\big) = \beta_j\ki.
$$
In the special case (\ref{aij}), this produces
$$
x_{ij}\ki = \Bigg( \frac{\alpha_i\ki b_{ij}}{\beta_j\ki} \Bigg)^{1/\eta}.
$$
Applying the column and row constraints successively leads to the iteration mapping $\preg$ from $\mR^n_+ \setminus \{0\}$ into $\mR^n_{++}$ defined by $\preg(\alpha) = \hat{\alpha}$, where the vector $\hat{\alpha}$ is obtained from
\begin{equation} \label{updates}
\beta_j = \frac{\| \alpha_\cdot b_{\cdot j} \|_{1/\eta}}{c_j^\eta} \quad (j=1,\dots,m), \qquad
\hat{\alpha}_i = \frac{r_i^\eta}{\| b_{i\cdot}/\beta_\cdot \|_{1/\eta}} \quad (i=1,\dots,n).
\end{equation}
Here, $\| \cdot \|_p$ (with $p = 1/\eta$) refers to the usual vector $p$-norm $\|x\|_p = \big(\sum_{i=1}^n |x_i|^p\big)^{1/p}$. The notations $\alpha_\cdot b_{\cdot j}$ and $b_{i\cdot}/\beta_{\cdot}$ are used to indicate the $n$-vector with entries $\alpha_i b_{ij}$ ($i=1,\dots,n$) and the $m$-vector with entries $b_{ij}/\beta_j$ ($j=1,\dots,m$) respectively. In the limit, as the regularization parameter $\eta$ tends to 0, the update mappings that are defined here converge to the ones in (\ref{pot}).

\begin{remark}
The update equations in (\ref{updates}) can equivalently be written as follows:
\begin{equation} \label{benamou}
\beta_j^{1/\eta} = \frac{\sum_i (\alpha_i b_{ij})^{1/\eta}}{c_j}\,, \qquad
\hat{\alpha}_i^{1/\eta} = \frac{r_i}{\sum_j (b_{ij}/\beta_j)^{1/\eta}}\,.
\end{equation}
Instead of computing the sequences $\alpha_i^{(k)}$ and $\beta_j^{(k)}$, one can therefore also process the iteration with the sequences $\tilde{\alpha}_i^{(k)}:=\big(\alpha_i^{(k)}\big)^{1/\eta}$ and $\tilde{\beta}_j^{(k)}:=\big(\beta_j^{(k)}\big)^{1/\eta}$. This alternative form of the iteration algorithm is the one presented by Benamou et al.\ \cite{Benamou}. It coincides with the IPFP algorithm, applied to the matrix with entries $b_{ij}^{1/\eta}= \exp(a_{ij}/\eta)$. The advantages and disadvantages of various possible formulations are discussed below in Section \ref{numerics}.
\end{remark}

\begin{remark} \label{deming}
As noted above, the IPFP algorithm corresponds to a SOMA minimization problem in which the cost function is given by
relative entropy. It is readily verified, via the differentiation-exponentiation-integration route (\ref{momasoma}), that
the related MOMA minimization problems refers to agents whose utility functions are \emph{quadratic}. There is some historical
interest to this observation. The IPFP method was presented in 1940 by Deming and Stephan \cite{Deming} as an algorithm that achieves a best approximation in the quadratic sense to a given matrix, subject to the row and column constraints. The authors soon discovered, however,
that their claim was mistaken \cite{Stephan}. Decades later, it was established by Ireland and Kullback \cite{Ireland} that the approximation computed via IPFP is optimal in the sense of Kullback-Leibler divergence (relative entropy), rather than in the quadratic sense. The error in \cite{Deming} may be softened a bit by the observation that, in the MOMA interpretation, IPFP does solve a quadratic minimization problem.
\end{remark}

\subsection{Perron-Frobenius analysis}

The mapping (\ref{updates}), when extended to take the value 0 at 0, is a homogeneous mapping of the nonnegative cone into itself. The convergence behavior of iterations defined by such mappings has been studied extensively, both in finite-dimensional and in infinite-dimensional spaces; see for instance
\cite{SinkhornKnopp,Franklin,Borwein,Lemmens,Georgiou}. A key tool in this literature is the Hilbert metric, which is defined for positive vectors of equal length by
\begin{equation}
d(x,y) = \log \frac{\max_i x_i/y_i}{\min_i x_i/y_i}\,.
\end{equation}
The Hilbert metric is in fact only a pseudo-metric on the positive cone, since points of the same ray have distance zero, but on the open
unit simplex it is a true metric. The use of this metric for the study of eigenvectors of positive linear mappings was initiated in
\cite{Birkhoff}, while its application to nonlinear mappings starts with \cite{Bushell}. The purpose of this section is
to show how a result of Oshime \cite{Oshime} in nonlinear Perron-Frobenius theory can be used to prove with little effort that the iteration (\ref{updates}) converges to a unique (up to a positive multiplicative factor) fixed point. This follows the development in \cite{PSW1}, but the proof can be much shorter in the present case. The approach facilitates comparison with the ineffective iteration (\ref{pot}).

The notion of nonsectionality is needed, which may be thought of as a nonlinear version of the well known irreducibility condition of linear Perron-Frobenius theory. It is defined as follows. In the definition below, the notation $x_R$ where $x$ is an $n$-vector and $R$ is a subset of the index set $\{1,\dots,n\}$ denotes the subvector with entries $x_i$, $i \in R$.

\begin{definition} \label{nonsect} \cite{Oshime}
A mapping $\phi$ of $\mR^n_+$ into itself is \emph{nonsectional} if, for
every decomposition of the index set $\{1,\dots,n\}$ into two complementary nonempty
subsets $R$ and $S$, there exists $s \in S$ such that
\begin{itemize}
\item[(i)] $(\phi(x))_s > (\phi(y))_s$ for all $x,\,y \in \mR^n_+$ such that
$x_R > y_R$ and $x_S=y_S>0$;
\item[(ii)] $(\phi(x^k))_s \rightarrow \infty$ for all sequences $(x^k)_{k=1,2,\dots}$
in $\mR^n_+$ such that $x^k_R \rightarrow \infty$, while $x^k_S$ is fixed and positive.
\end{itemize}
\end{definition}

The following theorem, as cited in \cite{PSW1}, can be used to prove uniqueness of the fixed points of the iteration defined by (\ref{updates}).

\begin{theorem} {\rm \cite[Thm.\,8, Remark 2]{Oshime}} \label{Oshime}
If a mapping $\phi$ from $\mR^n_+$ into itself is continuous, monotone,
homogeneous, and nonsectional, then the mapping $\phi$ has a
positive eigenvector, which is unique up to scalar multiplication, with a positive associated
eigenvalue. In other words, there exist
a constant $\theta^* > 0$ and a vector $x^* \in \mR^n_{++}$ such that $\phi(x^*)=
\theta^*x^*$, and if $\theta >0$ and $x \in \mR^n_{++}$ are such that $\phi(x)
=\theta x$, then $x$ is a scalar multiple of $x^*$.
\end{theorem}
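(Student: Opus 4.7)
The plan is to convert the eigenvalue problem into a fixed-point problem on the unit simplex $\Delta_n = \{x \in \mR_+^n : \sum_i x_i = 1\}$. By homogeneity of $\phi$, the normalized map $\tilde{\phi}(x) := \phi(x)/\sum_i (\phi(x))_i$ is well defined wherever $\phi(x) \neq 0$, and its fixed points in the relative interior of $\Delta_n$ are precisely the normalized positive eigenvectors of $\phi$, with eigenvalue $\theta^* = \sum_i (\phi(x^*))_i$. Monotonicity together with the strict monotonicity built into condition (i) of nonsectionality keeps $\phi$ away from zero on $\Delta_n$, so $\tilde{\phi}$ is a continuous self-map of the compact convex set $\Delta_n$, and Brouwer's fixed-point theorem yields a fixed point $x^* \in \Delta_n$.

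The next step is to show $x^*$ lies in the relative interior of $\Delta_n$. Suppose for contradiction that $S := \{i : x^*_i = 0\}$ is nonempty and let $R := \{i : x^*_i > 0\}$. Here I would exploit condition (ii) of nonsectionality: by choosing a test vector whose $S$-components are fixed and positive and whose $R$-components can be scaled up to infinity, the blow-up $(\phi(\cdot))_s \to \infty$ guaranteed by (ii) must be reconciled with the vanishing $(\phi(x^*))_s = \theta^* x^*_s = 0$ and the continuity of $\phi$ along an approximating sequence. The bookkeeping is delicate because (ii) requires $x_S$ to be genuinely fixed rather than merely bounded away from zero, so the approximation and the rescaling must be organized carefully to force the contradiction.

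For uniqueness, suppose $x^*, y^* \in \mR^n_{++}$ are two positive eigenvectors with (possibly distinct) eigenvalues $\theta_x, \theta_y > 0$. Rescale $y^*$ by a positive scalar so that $y^* \leq x^*$ coordinatewise with equality in at least one entry, and set $R := \{i : y^*_i < x^*_i\}$, $S := \{i : y^*_i = x^*_i\}$. If $R \neq \emptyset$, then $y^*_R < x^*_R$ and $y^*_S = x^*_S > 0$, so condition (i) of nonsectionality supplies $s \in S$ with $(\phi(x^*))_s > (\phi(y^*))_s$, i.e.\ $\theta_x x^*_s > \theta_y y^*_s = \theta_y x^*_s$, giving $\theta_x > \theta_y$. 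The symmetric rescaling with the roles of $x^*$ and $y^*$ reversed gives $\theta_y > \theta_x$, a contradiction. Hence $R = \emptyset$, so $y^*$ is a scalar multiple of $x^*$ and the eigenvalues must coincide.

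The main obstacle is the positivity step sketched above: condition (i) is cleanly stated only for pairs of vectors sharing a common positive $S$-tail, whereas a boundary fixed point has $x^*_S = 0$, so the contradiction has to be extracted through condition (ii) and a limiting argument, with careful matching of rescaling rates against rates of convergence. An attractive alternative that would unify existence, positivity, and uniqueness is to work directly with the Hilbert metric on the open simplex: homogeneity and monotonicity render $\tilde{\phi}$ non-expansive in the Birkhoff--Bushell sense, and nonsectionality should upgrade this to a strict contraction property, whereupon a Banach-style argument on this complete metric space delivers the whole conclusion in one stroke.
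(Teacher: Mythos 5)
First, note that the paper does not prove this statement at all: it is imported verbatim from Oshime \cite[Thm.\,8, Remark 2]{Oshime}, so there is no in-paper proof to compare against. Judged on its own merits, your proposal establishes uniqueness correctly but leaves the existence of a \emph{positive} eigenvector genuinely unproved, and one of the claims on which the existence argument rests is false. Specifically, the assertion that monotonicity plus condition (i) of nonsectionality ``keeps $\phi$ away from zero on $\Delta_n$'' fails: the map $\phi(x_1,x_2)=(\sqrt{x_1x_2},\sqrt{x_1x_2})$ is continuous, monotone, homogeneous and nonsectional, yet vanishes at the vertex $(1,0)$, so $\tilde\phi$ is not a self-map of $\Delta_n$ and Brouwer does not apply as stated. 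Even after repairing this (say by perturbing $\phi$ by $\eps(\sum_i x_i)\one$ and passing to the limit), the fixed point may sit on the boundary: for $\phi(x_1,x_2)=(x_1+x_2,\sqrt{x_1x_2})$, which is nonsectional, the vertex $(1,0)$ is a genuine fixed point of the normalized map with eigenvalue $1$, coexisting with the interior eigenvector. So ruling out boundary fixed points is not ``delicate bookkeeping'' to be deferred --- it is the substantive content of Oshime's theorem, and your sketch of how condition (ii) would be used does not close it: homogeneity gives $(\phi(x_R^0,\tfrac{\eps}{k}\one_S))_s=\tfrac1k(\phi(kx_R^0,\eps\one_S))_s$, and condition (ii) only guarantees that the numerator tends to infinity, with no control on the rate relative to $k$, so no contradiction with $(\phi(x^0))_s=0$ follows by continuity alone. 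The closing suggestion that nonsectionality ``should'' upgrade Birkhoff--Bushell non-expansiveness to a strict Hilbert-metric contraction is also unsupported; in the paper that contraction property is derived from \emph{strong} monotonicity of the specific map (\ref{updates}), not from nonsectionality, and general nonsectional maps need not be contractions.

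By contrast, your uniqueness argument is sound and is the standard one: rescale $y^*$ so that $y^*\leq x^*$ with equality in some coordinate, apply condition (i) to the decomposition $R=\{i:y^*_i<x^*_i\}$, $S=\{i:y^*_i=x^*_i\}$ (both nonempty if the vectors are not proportional, and $x^*_S=y^*_S>0$ since both vectors are strictly positive) to get $\theta_x>\theta_y$, and symmetrize to reach a contradiction. Together with the easy observation that $\theta^*>0$ (if $\phi(x^*)=0$ for $x^*\in\mR^n_{++}$, monotonicity and homogeneity force $\phi\equiv 0$, contradicting condition (ii)), this part would stand once existence of an interior eigenvector is secured. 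As it is, the proposal proves roughly half the theorem.
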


The existence of a unique positive eigenvector to the mapping defined by (\ref{updates}) does not immediately imply the existence of a fixed point; it needs to be shown that the corresponding eigenvalue is 1. This follows from the global feasibility constraint (cf.\ similar arguments in \cite[p.\,246]{Menon} and \cite{PSW1}).

\begin{lemma} \label{lemma1}
If, in the iteration mapping \ed{(\ref{updates})}, we have $\hat{\alpha} = \theta \alpha$ for $\alpha \in \mR^n\setminus\{0\}$, then $\theta = 1$.
\end{lemma}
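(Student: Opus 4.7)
The plan is to exploit the structure of the iteration (\ref{updates}), written in the equivalent form (\ref{benamou}), together with the global feasibility condition (\ref{gfc}). The key observation is that under the hypothesis $\hat{\alpha} = \theta \alpha$, the identity $\hat{\alpha}_i^{1/\eta} = \theta^{1/\eta} \alpha_i^{1/\eta}$ lets the scalar $\theta^{1/\eta}$ be pulled out of any $i$-sum. After combining this with the $\beta$-update, the constraint $\sum_i r_i = \sum_j c_j$ should pin down $\theta^{1/\eta} = 1$.

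Concretely, I would start by rearranging the second equation of (\ref{benamou}) as
\begin{equation*}
r_i = \hat{\alpha}_i^{1/\eta} \sum_{j=1}^m \bigl( b_{ij}/\beta_j \bigr)^{1/\eta},
\end{equation*}
then substitute $\hat{\alpha}_i = \theta \alpha_i$ and sum over $i$. Interchanging the order of summation and regrouping yields
\begin{equation*}
\sum_{i=1}^n r_i = \theta^{1/\eta} \sum_{j=1}^m \beta_j^{-1/\eta} \sum_{i=1}^n \bigl( \alpha_i b_{ij} \bigr)^{1/\eta}.
\end{equation*}
The first equation of (\ref{benamou}) supplies $\sum_i ( \alpha_i b_{ij} )^{1/\eta} = c_j \beta_j^{1/\eta}$, which exactly cancels the factor $\beta_j^{-1/\eta}$, leaving $\sum_i r_i = \theta^{1/\eta} \sum_j c_j$. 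The global feasibility condition (\ref{gfc}) then forces $\theta^{1/\eta} = 1$, that is, $\theta = 1$.

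There is essentially no obstacle here; the whole argument is algebra organized around the identity built into the definition of $\beta$. The only point worth noting is that everything takes place in the strictly positive cone: since the mapping in (\ref{updates}) sends $\mR^n_+ \setminus \{0\}$ into $\mR^n_{++}$, the hypothesis $\hat{\alpha} = \theta \alpha$ automatically forces $\alpha \in \mR^n_{++}$ and $\theta > 0$, so the fractional powers are unambiguous throughout.
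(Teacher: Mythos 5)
Your proof is correct and is essentially the paper's own argument: both sum the row-update equations over $i$, use the column-update equations to identify the resulting double sum with $\sum_j c_j$ up to the factor $\theta^{1/\eta}$, and invoke the global feasibility condition (\ref{gfc}) to conclude $\theta = 1$. The only difference is presentational — the paper equates the two double sums directly, while you chain the substitutions — and your closing remark about positivity is a sensible extra precaution.
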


\begin{proof}
The equations that define $\hat{\alpha}$ from $\beta$ and $\beta$ from $\alpha$, written in implicit form, are
$$
\sum_{j=1}^m \Big( \frac{\hat{\alpha}_i b_{ij}}{\beta_j} \Big)^{1/\eta} = r_i\,, \quad
\sum_{i=1}^n \Big( \frac{\alpha_i b_{ij}}{\beta_j} \Big)^{1/\eta} = c_j.
$$
The global feasibility constraint $\sum_i r_i = \sum_j c_j$ therefore implies that
$$
\sum_{i=1}^n \sum_{j=1}^m \Big( \frac{\hat{\alpha}_i b_{ij}}{\beta_j} \Big)^{1/\eta} = \sum_{i=1}^n \sum_{j=1}^m \Big( \frac{\alpha_i b_{ij}}{\beta_j} \Big)^{1/\eta}.
$$
Consequently, if $\hat{\alpha} = \theta \alpha$, then $\theta$ must be equal to 1.
\qed
\end{proof}

We can verify the conditions in Thm.\,\ref{Oshime} for the mapping defined in (\ref{updates}), and compare them to the properties of the update mapping defined by (\ref{pot}). A mapping $\phi$ from (a subset of) $\mR^{n_1}$ to $\mR^{n_2}$ is said to be \emph{strongly monotone} if the conditions $x_i \geq y_i$ for $i = 1,\dots,n_1$ and $x_i > y_i$ for some $i \in \{1,\dots,n_1\}$ imply $(\phi(x))_i > (\phi(y))_i$ for all $i = 1,\dots,n_2$.

\begin{proposition} \label{lemma2}
The mapping from the non-negative cone $\mR_+$ to itself defined by \ed{(\ref{pot})} is continuous, monotone, and homogeneous. The mapping \ed{(\ref{updates})} from $\mR_+$ into itself is continuous, strongly monotone, homogeneous, and nonsectional.
\end{proposition}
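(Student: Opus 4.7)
The proposition splits into two independent claims, one per mapping, and both are essentially direct reads of the formulas (\ref{pot}) and (\ref{updates}). For the unregularized map $\phi$, continuity is just the continuity of finite maxima and minima of continuous functions. Monotonicity holds because $\alpha \mapsto \max_i \alpha_i b_{ij}$ is coordinatewise non-decreasing for each $j$ (since $b_{ij}>0$), and then $\beta \mapsto \min_j \beta_j/b_{ij}$ is coordinatewise non-decreasing for each $i$. Homogeneity is immediate from $\max_i (\lambda\alpha_i) b_{ij} = \lambda \max_i \alpha_i b_{ij}$ and the analogous identity for $\min$. Strong monotonicity fails in general, which is exactly the pathology flagged at the end of Section 4.1: bumping a single coordinate $\alpha_{i_0}$ leaves every column maximum unchanged whenever $\alpha_{i_0}b_{i_0 j}$ never attains the max, so a single coordinate change can fail to propagate.

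For the regularized map $\phi_\eta$, continuity and homogeneity of degree one reduce to those of $x \mapsto \|x\|_{1/\eta} = (\sum_i x_i^{1/\eta})^{\eta}$ combined with continuity of the reciprocal on the positive reals. Both ingredients are valid for every $\eta>0$, even when $1/\eta<1$ and $\|\cdot\|_{1/\eta}$ is only a quasi-norm; the triangle inequality is nowhere needed. The key structural fact is strong monotonicity, and here positivity of \emph{every} $b_{ij}$ is essential: if $\alpha_{i_0}$ is strictly increased while all other $\alpha_i$ are held fixed, then the term $(\alpha_{i_0} b_{i_0 j})^{1/\eta}$ is strictly increased, hence $\|\alpha_\cdot b_{\cdot j}\|_{1/\eta}$ strictly increases for \emph{every} column $j$; consequently every $\beta_j$ strictly increases, and then $\hat\alpha_i = r_i^\eta/\|b_{i\cdot}/\beta_\cdot\|_{1/\eta}$ strictly increases for \emph{every} row $i$. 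This is exactly strong monotonicity.

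Nonsectionality is then essentially automatic. For any decomposition of $\{1,\dots,n\}$ into complementary nonempty sets $R$ and $S$, pick any $s \in S$. Condition (i) of Definition \ref{nonsect} is precisely the specialization of strong monotonicity to this decomposition. For condition (ii), suppose $\alpha^k_R \to \infty$ while $\alpha^k_S$ is fixed and positive; then for every $j$ the sum $\sum_i (\alpha^k_i b_{ij})^{1/\eta}$ blows up (any one of the $R$-indexed terms suffices, since all summands are non-negative and $b_{ij}>0$), so $\beta^k_j \to \infty$, hence $\|b_{i\cdot}/\beta^k_\cdot\|_{1/\eta} \to 0$, giving $\hat\alpha^k_i \to \infty$ for every $i$ and in particular for $i=s$. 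The only thing that one has to watch, rather than a genuine obstacle, is to avoid invoking any Banach-norm axiom $\|\cdot\|_{1/\eta}$ may lack when $1/\eta<1$: the argument uses only positive homogeneity of degree one and coordinatewise strict monotonicity of $t \mapsto t^{1/\eta}$ on $\mR_+$, both of which hold throughout.
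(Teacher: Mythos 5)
Your proof is correct and follows essentially the same route as the paper's: continuity, monotonicity, and homogeneity are read off the formulas; strong monotonicity of $\phi_\eta$ comes from the positivity of all $b_{ij}$ propagating a strict increase in any $\alpha_{i_0}$ through every $\beta_j$ to every $\hat\alpha_i$; condition (i) of nonsectionality is a specialization of strong monotonicity and condition (ii) follows from the divergence of all $\beta_j$. The extra care you take about $\|\cdot\|_{1/\eta}$ possibly being only a quasi-norm is harmless but unnecessary here, since only positive homogeneity and coordinatewise strict monotonicity are used.
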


\begin{proof}
Continuity, monotonicity, and homogeneity are immediate from the definitions, both for the non-regularized mapping and for the regularized version. To show strong monotonicity in the case of (\ref{updates}), note that, due to the assumed positiveness of all coefficients $b_{ij}$, an increase of any of the components of the weight vector $\alpha$ implies an increase of all of the components of the vector $\beta$, which in its turn implies an increase of all components of the image vector $\hat{\alpha}$. Strong monotonicity implies that condition (i) in the definition of nonsectionality is satisfied. Moreover, when one of the components of $\alpha$ tends to infinity, the same is true for all of the components of $\beta$, and consequently for all of the components of $\hat{\alpha}$. This shows that condition (ii) is satisfied by the mapping (\ref{updates}).
\qed
\end{proof}

\begin{corollary}
The mapping $\phi_{\eta}$ defined by \ed{(\ref{updates})} has a unique (up to multiplication by a positive constant) fixed point in $\mR^n_{++}$. For any initial point $\alpha^{(0)}$ in $\mR^n_+\setminus\{0\}$, the sequence of normalized iterates defined by $\alpha\kn = \preg(\alpha\ki)/\|\preg(\alpha\ki)\|_1$ converges to the unique fixed point on the open unit simplex.
\end{corollary}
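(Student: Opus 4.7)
The plan is to string together Theorem \ref{Oshime}, Proposition \ref{lemma2}, and Lemma \ref{lemma1} for the existence--uniqueness half, and then add a Hilbert-metric contraction argument for the convergence half. For the fixed point, Proposition \ref{lemma2} verifies that $\phi_\eta$ is continuous, monotone, homogeneous, and nonsectional, which are exactly the hypotheses of Theorem \ref{Oshime}. The theorem therefore supplies a positive eigenvector $\alpha^* \in \mR^n_{++}$, unique up to positive scalar multiplication, with a positive eigenvalue $\theta^*$. Applying Lemma \ref{lemma1} to $\alpha^*$ forces $\theta^* = 1$, so $\alpha^*$ is in fact a fixed point of $\phi_\eta$. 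Conversely, any fixed point in $\mR^n_{++}$ is a positive eigenvector with eigenvalue $1$, hence a positive scalar multiple of $\alpha^*$. Normalizing on the $\ell^1$-simplex pins down a unique fixed point of the normalized iteration $\alpha \mapsto \phi_\eta(\alpha)/\|\phi_\eta(\alpha)\|_1$.

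For convergence, I would first observe from the explicit formula (\ref{updates}) that $\phi_\eta$ sends $\mR^n_+ \setminus \{0\}$ into $\mR^n_{++}$: the positivity of every $b_{ij}$ forces every $\beta_j > 0$ as soon as any component of $\alpha$ is positive, and then every $\hat{\alpha}_i > 0$. So after one step we may assume $\alpha^{(0)} \in \mR^n_{++}$. Equip the open simplex $\Delta = \{\alpha \in \mR^n_{++} : \|\alpha\|_1 = 1\}$ with the Hilbert projective metric $d$; $\Delta$ is complete under $d$. Monotone homogeneous self-maps of the open cone are non-expansive for $d$ by the Birkhoff--Bushell theorem, and the fact that $\phi_\eta$ maps $\mR^n_+\setminus\{0\}$ strictly into $\mR^n_{++}$, combined with the strong monotonicity from Proposition \ref{lemma2}, upgrades non-expansiveness to strict contraction on compact subsets of $\Delta$, as in \cite{Lemmens} and \cite{PSW1}. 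A Banach-type argument, together with the uniqueness of the fixed point, then yields $d(\alpha^{(k)}/\|\alpha^{(k)}\|_1,\alpha^*) \to 0$, and since $d$ is topologically equivalent to $\|\cdot\|_1$ on compact subsets of $\Delta$, convergence in $\|\cdot\|_1$ follows.

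The main obstacle is the convergence half: Theorem \ref{Oshime} as quoted furnishes only existence and uniqueness of the positive eigenvector, and bridging the gap to convergence of iterates requires the full Hilbert-metric machinery. The essential ingredient is the positivity of all coefficients $b_{ij}$, which makes $\phi_\eta$ land in the interior of the cone and thereby makes its Birkhoff--Bushell contraction ratio strictly less than one on any compact subset of $\Delta$; without this, one could only conclude non-expansiveness and would have to fall back on a separate argument (e.g., invariance of $\omega$-limit sets combined with uniqueness of the eigenvector) to rule out periodic or wandering behavior of normalized iterates.
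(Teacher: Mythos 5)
Your proposal is correct and follows essentially the same route as the paper: Proposition \ref{lemma2} plus Theorem \ref{Oshime} (with Lemma \ref{lemma1} pinning the eigenvalue at $1$) for existence and uniqueness of the fixed point, and then Hilbert-metric contractivity of homogeneous strongly monotone cone maps, combined with uniqueness, for convergence of the normalized iterates. The only cosmetic difference is that the paper closes the convergence argument by citing Nadler's iterative test for (non-uniformly) contractive maps rather than your ``Banach-type argument on compact subsets'', but both rest on the same machinery from \cite{Lemmens} and \cite{PSW1}.
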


\begin{proof}
The first claim follows from Prop.\,\ref{lemma2} and Thm.\,\ref{Oshime}. By a basic result in Perron-Frobenius theory (see for instance \cite[Ch.\,2]{Lemmens} and \cite[Lemma 4.4]{PSW1}), a homogeneous and strongly monotone mapping from the positive cone into itself is contractive in the sense of the Hilbert metric. Given the uniqueness of the fixed point, convergence of the iteration then follows from a result of Nadler \cite{Nadler} (see also \cite[Cor.\,5.4]{PSW1}).
\qed
\end{proof}

The non-regularized mapping (\ref{pot}) does not satisfy the property of nonsectionality. Indeed, otherwise uniqueness of a fixed point would follow as in the corollary above, whereas it has already been argued that the mapping (\ref{pot}) has many fixed points, which are not related to each other by multiplication by a positive constant. To see the violation of nonsectionality more directly, consider a point $\alpha \in \mR_{++}^n$ in which the first component $\alpha_1$ is large relative to the other components, so that, for all $j$, $\alpha_1 b_{1j} > \alpha_i b_{ij}$ for $i=2,\dots,n$. A sufficiently small increase of the components $\alpha_2,\dots,\alpha_n$ will then have no effect on any of the parameters $\beta_j$ defined in (\ref{pot}), and consequently there is no effect on the update $\hat{\alpha}$ either. This means that condition (i) in the definition of nonsectionality is not satisfied with $S=\{1\}$ and $R=\{2,\dots,n\}$. It also follows that the mapping defined by (\ref{pot}) is not strongly monotone.

\section{Numerics} \label{numerics}

Alternative implementations of the regularization method for optimal transport problems can be derived from (\ref{updates})
and from (\ref{benamou}). These formulas are mathematically equivalent, but the corresponding implementations may differ
in their numerical properties, especially when the regularization parameter is close to 0.
As noted in \cite{Benamou}, when the parameter $\eta$ is very small, the computation of a number of the form $b^{1/\eta}$ may lead to numerical overflow or underflow. Such problems are less likely in the implementation (\ref{updates}), which uses the operations of raising a number to the power $\eta$ and computing the $p$-norm of a vector with $p=1/\eta$; both of these operations can be implemented without the creation of intermediate results that are very large or very small.\footnote{The size of the machine epsilon still imposes a bound on how small the parameter $\eta$ can be made. As soon as the $1/\eta$-norms that are computed become numerically equal to $\infty$-norms, and numbers raised to the power $\eta$ become numerically equal to 1, the algorithm (\ref{updates}) degenerates into the ineffective non-regularized version (\ref{pot}).}

There are different forms of the IPFP algorithm. It can be written in a vector iteration form, starting from a given positive initial matrix $(x_{ij}^{(0)})$ and a positive initial vector $u^{(0)}$:
\begin{equation} \label{vec}
v_j^{(k+1)} = \frac{c_j}{\sum_i u_i^{(k)} x_{ij}^{(0)}}\,, \qquad
u_i^{(k+1)} = \frac{r_i}{\sum_j x_{ij}^{(0)} v_j^{(k+1)}}\,, \qquad
x_{ij}^{(k+1)} = u_i^{(k+1)}x_{ij}^{(0)}v_j^{(k+1)}.
\end{equation}
Alternatively, a matrix iteration can be used:
\begin{equation} \label{mat}
x_{ij}^{(k+\frac{1}{2})} = x_{ij}^{(k)} \frac{c_j}{\sum_i x_{ij}^{(k)}}\,, \qquad
x_{ij}^{(k+1)} = \frac{r_i}{\sum_j x_{ij}^{(k+\frac{1}{2})}} \, x_{ij}^{(k+\frac{1}{2})}.
\end{equation}
It can be verified immediately that, if the initial vector $u^{(0)}$ is chosen such that $u_i^{(0)} = 1$ for all $i$, the two recursions
produce the same sequence of matrices $x_{ij}^{(k)}$. One might say that (\ref{vec}) presents a cumulative form of the computation, whereas (\ref{mat}) is the corresponding incremental form. Corresponding to the vector recursion (\ref{updates}), a matrix iteration can
analogously be defined as follows. Starting from a given matrix $(z_{ij}^{(0)})$, define
\begin{equation} \label{upd}
z_{ij}^{(k+\frac{1}{2})} =  s_j^{(k)} z_{ij}^{(k)}, \qquad
z_{ij}^{(k+1)} = t_i^{(k)} z_{ij}^{(k+\frac{1}{2})}
\end{equation}
where the incremental multipliers $s^{(k)} \in \mR^m$ and $t^{(k)} \in \mR^n$ are given by
\begin{equation} \label{incr}
s_j^{(k)} =  \frac{c_j^\eta}{\| z^{(k)}_{\cdot j} \|_{1/\eta}}\,, \qquad
t_i^{(k)} =  \frac{r_i^\eta}{ \| z^{(k+\frac{1}{2})}_{i\cdot} \|_{1/\eta} } \,.
\end{equation}
The matrices $(z_{ij}^{(k)})$ relate to the matrices $(x_{ij}^{(k)})$ as defined in (\ref{updates}) via $x_{ij} = z_{ij}^{1/\eta}$.
Starting from the initialization $z_{ij} = b_{ij}$, one can therefore iterate the matrices $(z_{ij})$, and extract the
approximate solution of the optimal transport problem $(x_{ij})$ at the end.

Extracting the approximate solution only at the end raises the question how to monitor the quality of the solution in intermediate stages, in order to obtain a stopping criterion for the iteration. By construction of the algorithm, after each full step,
the row sums of the matrix $(x_{ij}^{(k)})$ are equal to the prescribed row sums $r_i$, up to machine precision. Therefore, it
is natural to measure the accuracy of the solution by the deviation of the column sums from the prescribed
values $c_j$. From the equality $\sum_{j=1}^m x_{ij} = r_i$ for all $i$, it follows that
$\sum_{j=1}^m \sum_{i=1}^n x_{ij} = \sum_{j=1}^m c_j$.
Consequently, the Hilbert metric can be used as a valid distance measure between the vector of column sums
of the approximate solution and the vector $c$. We have
\begin{align}
d\Big(\sum_{i=1}^n x^{(k)}_{i\cdot},c \Big) & = \log \frac{\max_j \big(\sum_i x^{(k)}_{ij}\big)/c_j}{\min_j \big(\sum_i x^{(k)}_{ij}\big)/c_j} = \log \frac{\max_j \big(\| z^{(k)}_{\cdot j} \|_{1/\eta} /c_j^{\eta}\big)^{1/\eta}}{\min_j \big(\| z^{(k)}_{\cdot j} \|_{1/\eta} /c_j^{\eta}\big)^{1/\eta}} \nonumber \\
& = \frac{1}{\eta} \log \frac{\max_j \| z^{(k)}_{\cdot j} \|_{1/\eta} /c_j^{\eta}}{\min_j \| z^{(k)}_{\cdot j} \|_{1/\eta}/ c_j^{\eta}} = \frac{1}{\eta} \log \frac{\max_j s_j^{(k)}}{\min_j s_j^{(k)}} \label{critval} = \frac{1}{\eta} \, d(s^{(k)},\one)
\end{align}
where $s_j^{(k)}$ is defined in (\ref{incr}), and $\one$ is the all-ones vector. Computation of the Hilbert metric is therefore an easy byproduct of the updating rules (\ref{upd}--\ref{incr}), and one can monitor the progress of the convergence without actually computing the intermediate approximate solutions $(x_{ij}^{(k)})$.

As noted above, the non-regularized vector iteration (\ref{pot}) can have at most one nontrivial step. In this step,
it produces a row balancing ensuring that every row of the weight matrix $(b_{ij})$ contains at least one element that is
maximal among all elements in the column that it belongs to. From a numerical perspective, this type of row equilibration is
possibly useful.

An iterative algorithm to solve the regularized version of the optimal transport problem
may now be written as follows. Inputs to the computation are: the weight matrix $(a_{ij})$; the column sums $c_j$;
the row sums $r_i$; the chosen regularization parameter $\eta$; and, finally, the tolerance level that is chosen as a
stopping criterion for the iteration.
\begin{enumerate}
\item Define $b_{ij} = \exp(a_{ij})$.
\item Define $\hat{b}_{ij} = \alpha_i b_{ij}$, with $\alpha_i = \min_j\!\big((\max_i b_{ij})/b_{ij}\big)$
(preliminary row equilibration).
\item Initialize $\big(z_{ij}^{(0)}\big) = (\hat{b}_{ij})$.
\item Carry out the iteration (\ref{upd}--\ref{incr}), until the quantity
$\frac{1}{\eta} \log \!\big((\max_j s_j^{(k)})/(\min_j s_j^{(k)})\big)$
is less than the chosen tolerance level.
\item Compute the approximate solution by $x_{ij} = \big(z^{(k)}_{ij}\big)^{1/\eta}$, where $k$ is the index of the final iteration step.
\end{enumerate}

Numerical results for a test problem are shown in the figures below. The weight matrix is defined by a discretization, on
a grid of size $256 \times 256$, of the function $a(x,y) = \sin\!\big(4\pi \big( (x-0.5)^2 + (y-0.5)^2 \big)\big)$ defined on the
unit square.
Marginals are given by scaled discretizations of the functions $c(x) = |x-0.5|$ and $r(y) = |y-0.5|$. Fig.\,\ref{problemdata}
gives a visual description of the problem data (darker cells correspond to higher values of $a_{ij}$). The results of computations are shown in Fig.\,\ref{results} for $\eta = 10^{-2}$, $\eta=10^{-3}$, and $\eta=10^{-4}$. The tolerance level is set at $0.01$ in each case. CPU time per iteration step is mainly determined by the grid size, and is not
influenced much by the choice of the regularization parameter $\eta$. However, the number of
iteration steps required to reach a given level of convergence appears to be approximately inversely proportional to $\eta$.
The computation time is therefore also inversely proportional to $\eta$.

\begin{figure}
\begin{center}
\begin{subfigure}[b]{.4\linewidth}
\includegraphics[width=6cm]{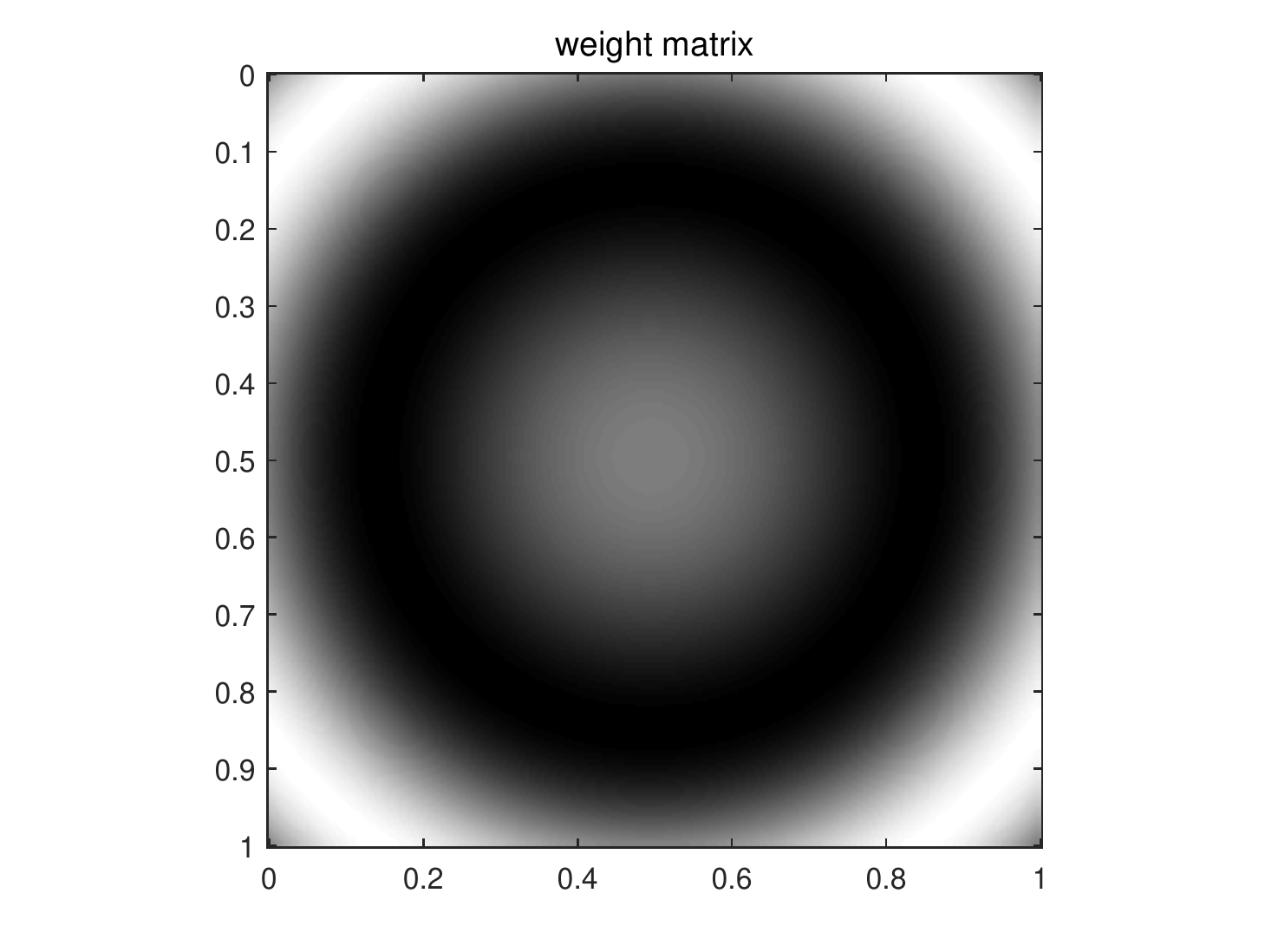}
\end{subfigure}
\begin{subfigure}[b]{.4\linewidth}
\includegraphics[width=6cm]{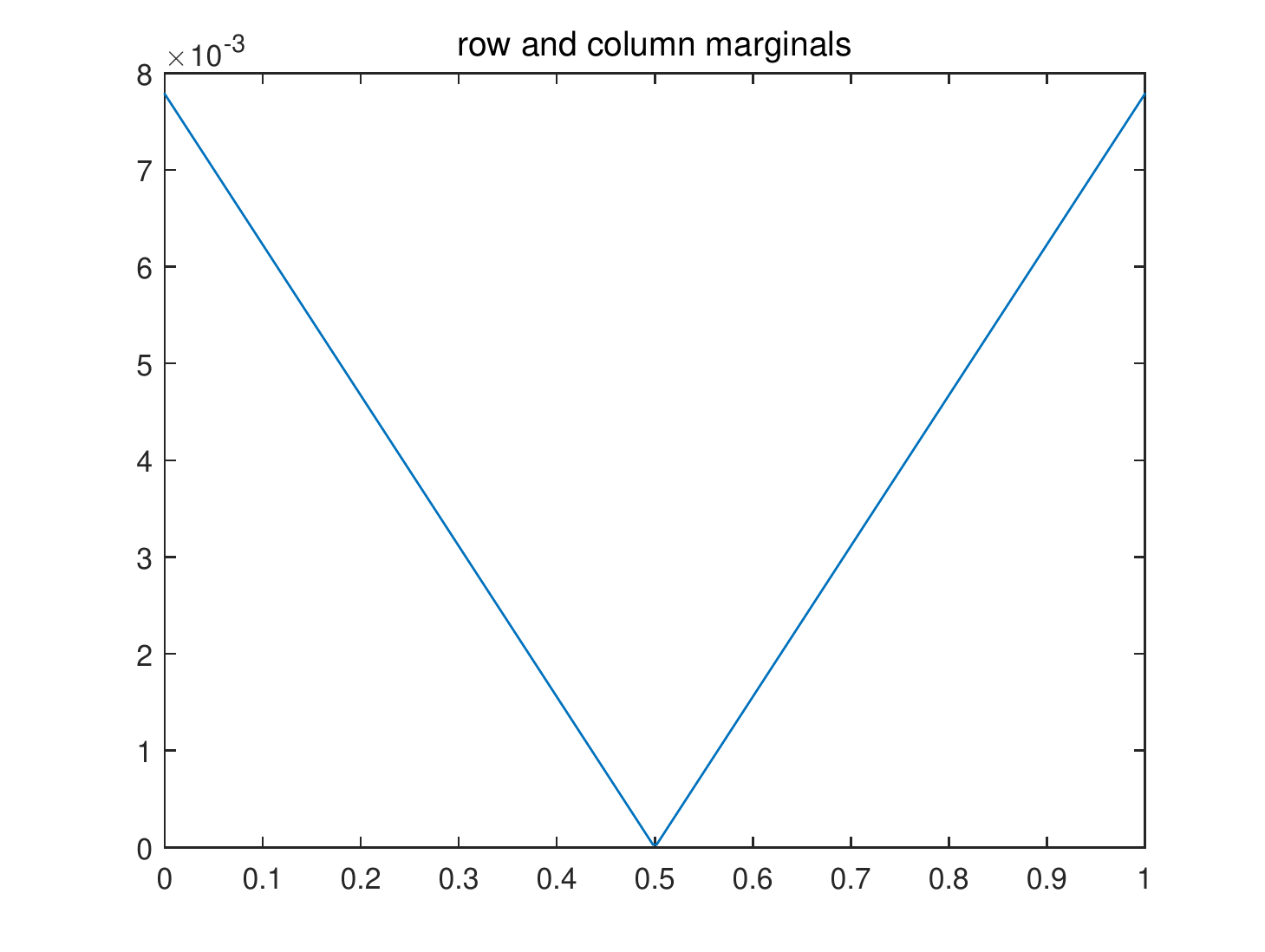}
\end{subfigure}
\end{center}
\caption{Problem data \label{problemdata}}
\end{figure}

\begin{figure}
\begin{subfigure}[b]{.3\linewidth}
\includegraphics[width=5cm]{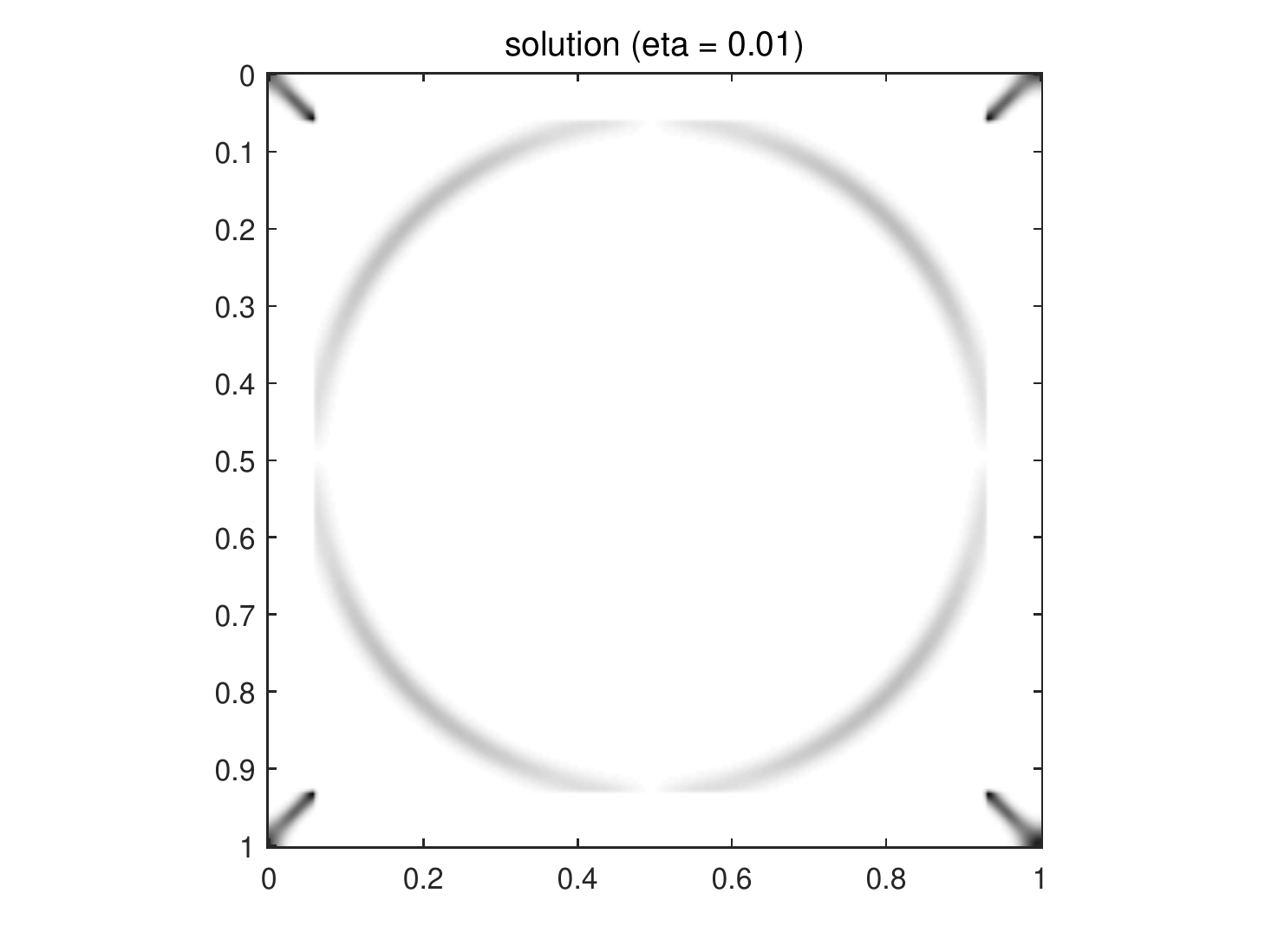}
\end{subfigure}
\begin{subfigure}[b]{.3\linewidth}
\includegraphics[width=5cm]{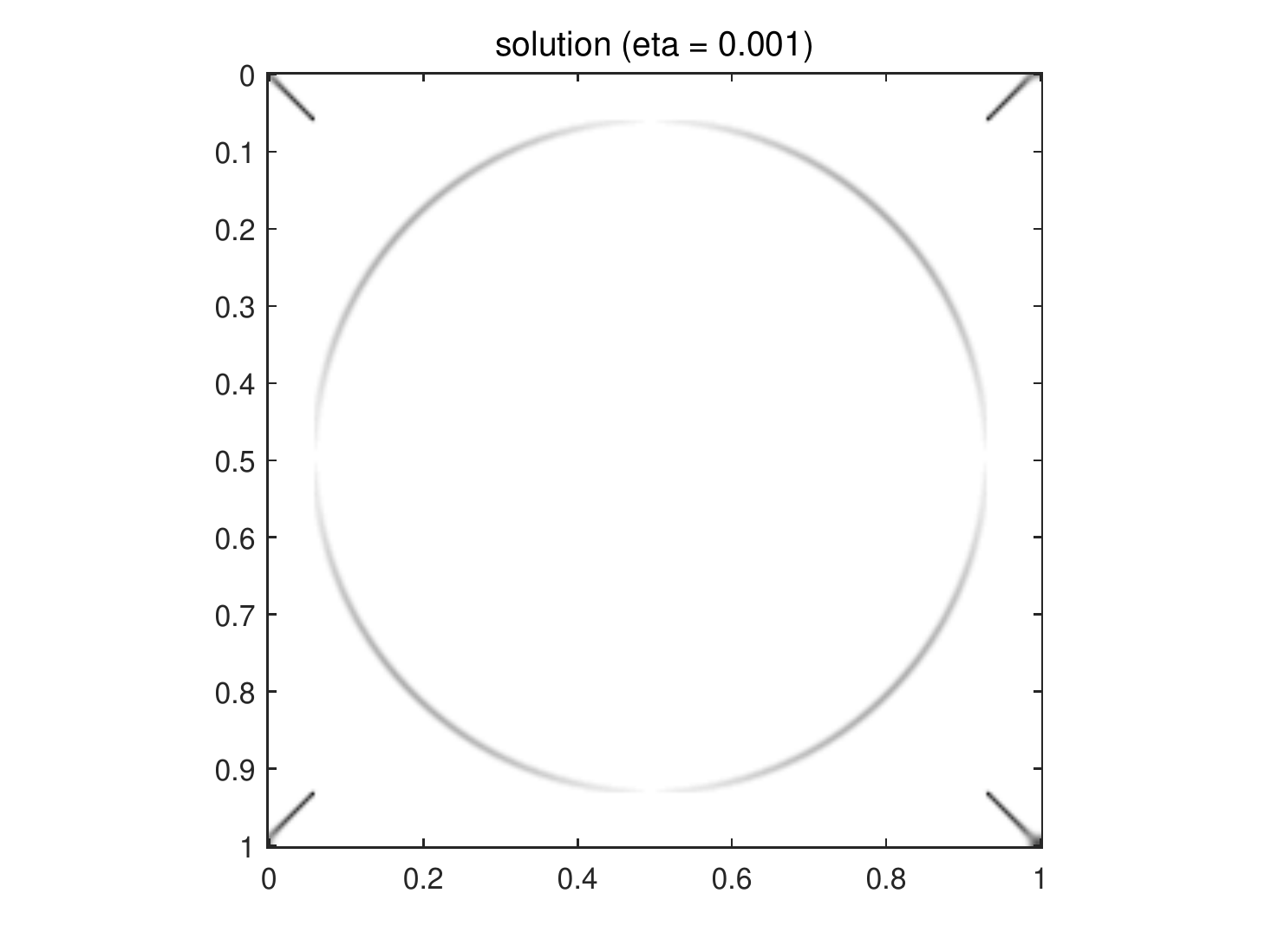}
\end{subfigure}
\begin{subfigure}[b]{.3\linewidth}
\includegraphics[width=5cm]{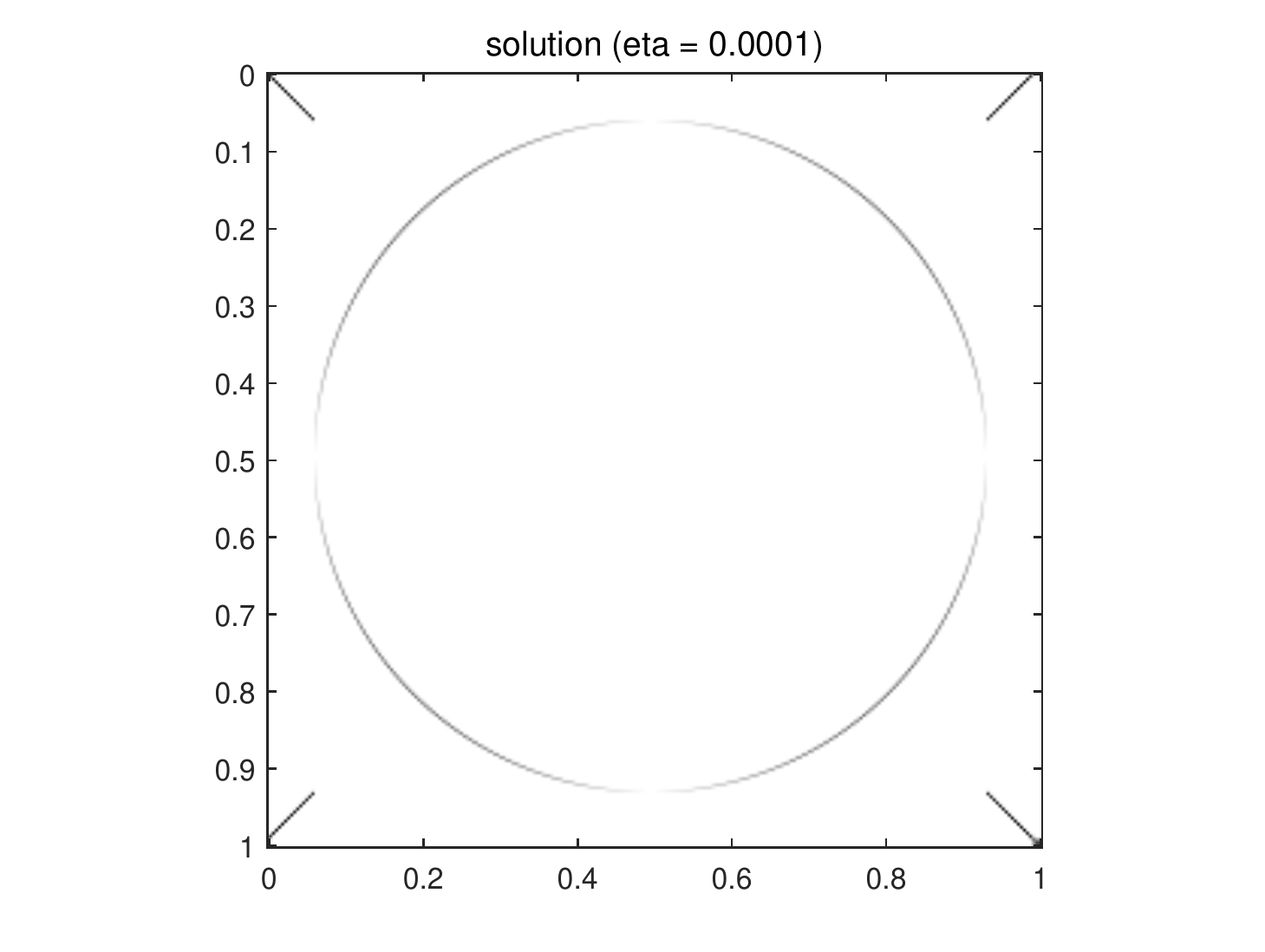}
\end{subfigure}
\\
\begin{subfigure}[b]{.3\linewidth}
\includegraphics[width=5cm]{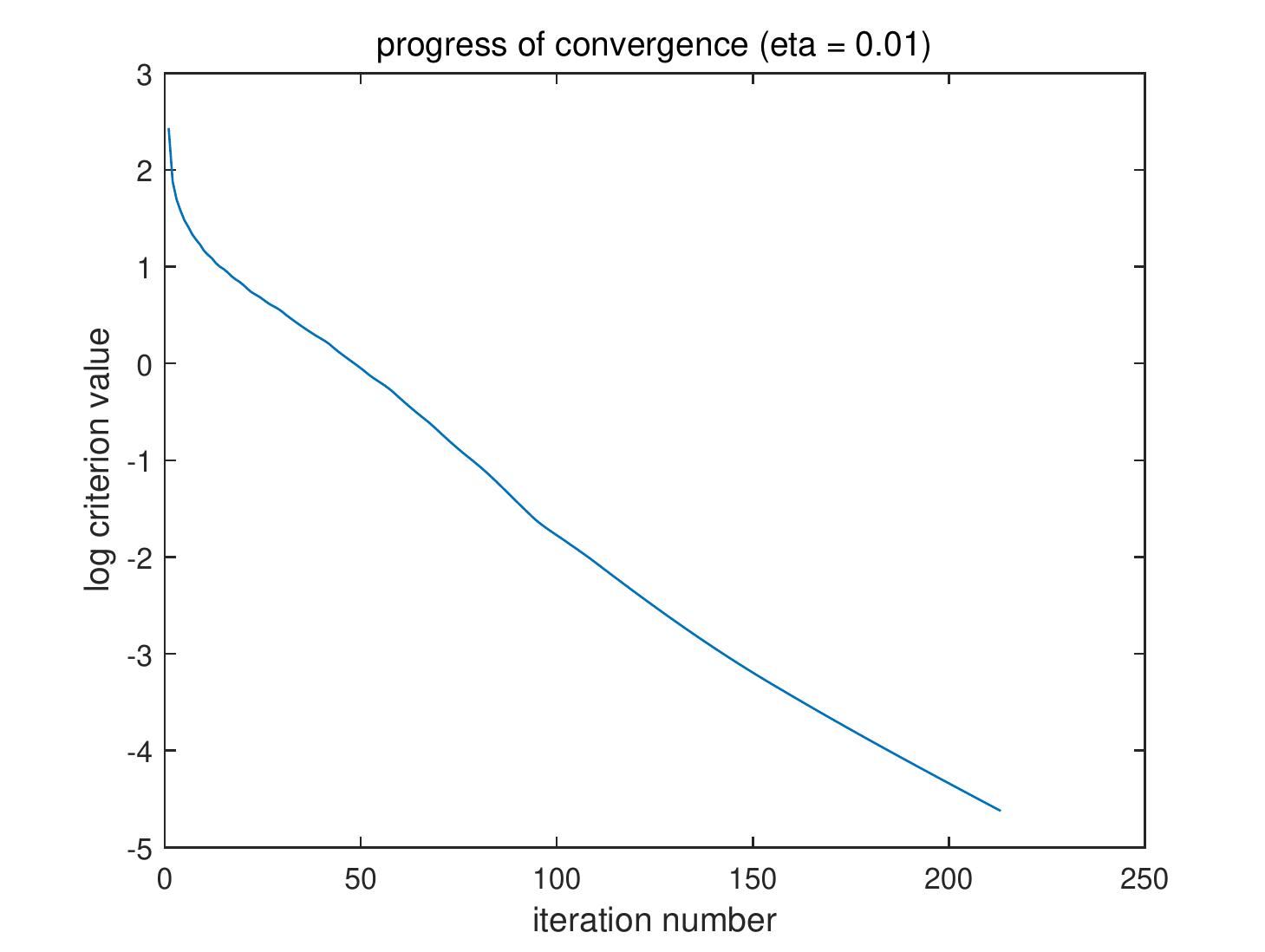}
\end{subfigure}
\begin{subfigure}[b]{.3\linewidth}
\includegraphics[width=5cm]{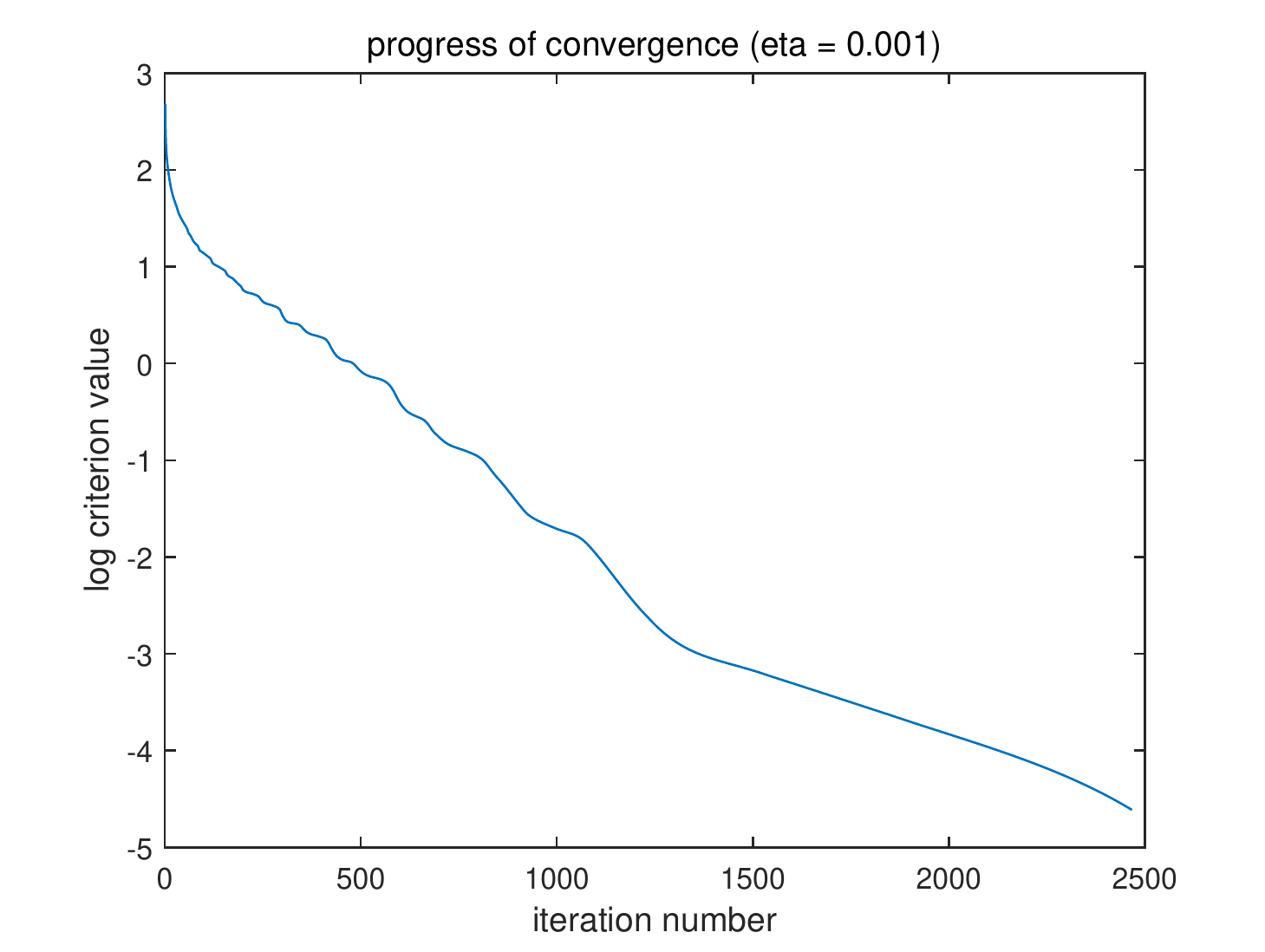}
\end{subfigure}
\begin{subfigure}[b]{.3\linewidth}
\includegraphics[width=5cm]{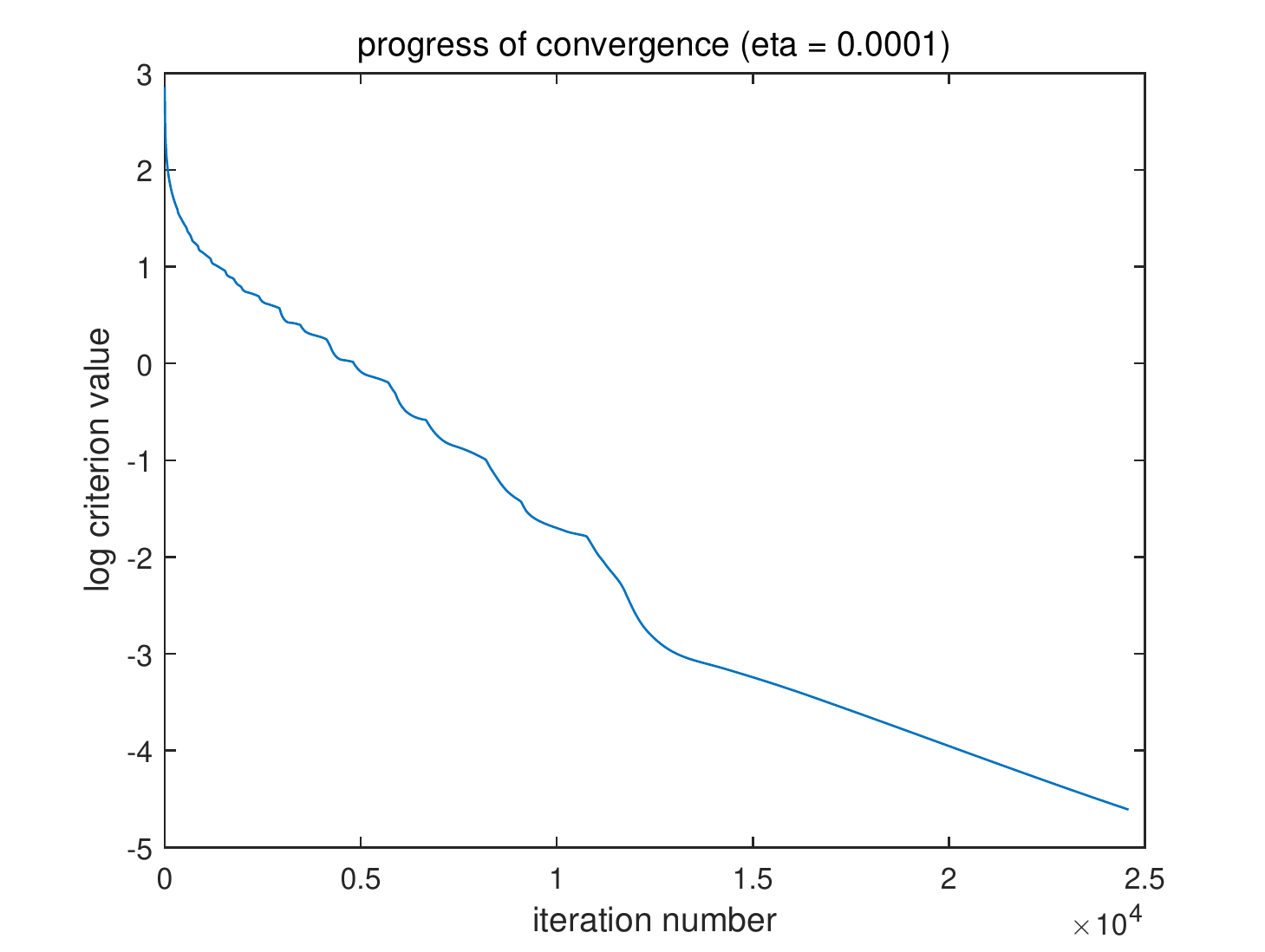}
\end{subfigure}
\caption{Results of computational experiment \label{results}}
\end{figure}

\begin{remark}
The computation time can be reduced by carrying out the iteration in stages, starting with a relatively high value of the regularization parameter, and reducing it stepwise. Such stagewise calculation is standard in the interior point method of linear programming (see for instance \cite{Wright}). A simple reduction scheme was applied to the problem data given above, consisting of twelve stages in which the regularization parameter was reduced
by the factor 1.5 at each step to reach the final level $10^{-4}$, while the tolerance level was kept at $0.01$ in
all stages. The stagewise computation required less than 1000 iterations to converge. Compared to the single-stage calculation that starts immediately with the final
value of the regularization parameter, this implies a reduction of computation time by approximately a factor 25.
\end{remark}

It is seen from the graphs of the criterion value (\ref{critval}) that the convergence is somewhat irregular. This behavior becomes
even more pronounced in the case of small examples. Of course, in such examples there is not really a reason to apply regularization,
since small instances can be easily solved by any linear programming code. However, one may be able to get more insight into the
behavior of the iterative algorithm from simple examples. Consider a problem with the following data:
\begin{equation} \label{smallexdata}
(a_{ij}) = \begin{bmatrix}
0 && 1 && 0.5 \\ 0.7 && 0.5 && 0.3 \\ 0.6 && 0.3 && 0
\end{bmatrix},
\quad r = [ 0.25 \;\; 0.25 \;\; 0.5], \quad c = [0.2 \;\; 0.6 \;\; 0.2].
\end{equation}
\begin{figure}[t]
\begin{center}
\includegraphics[width=8cm]{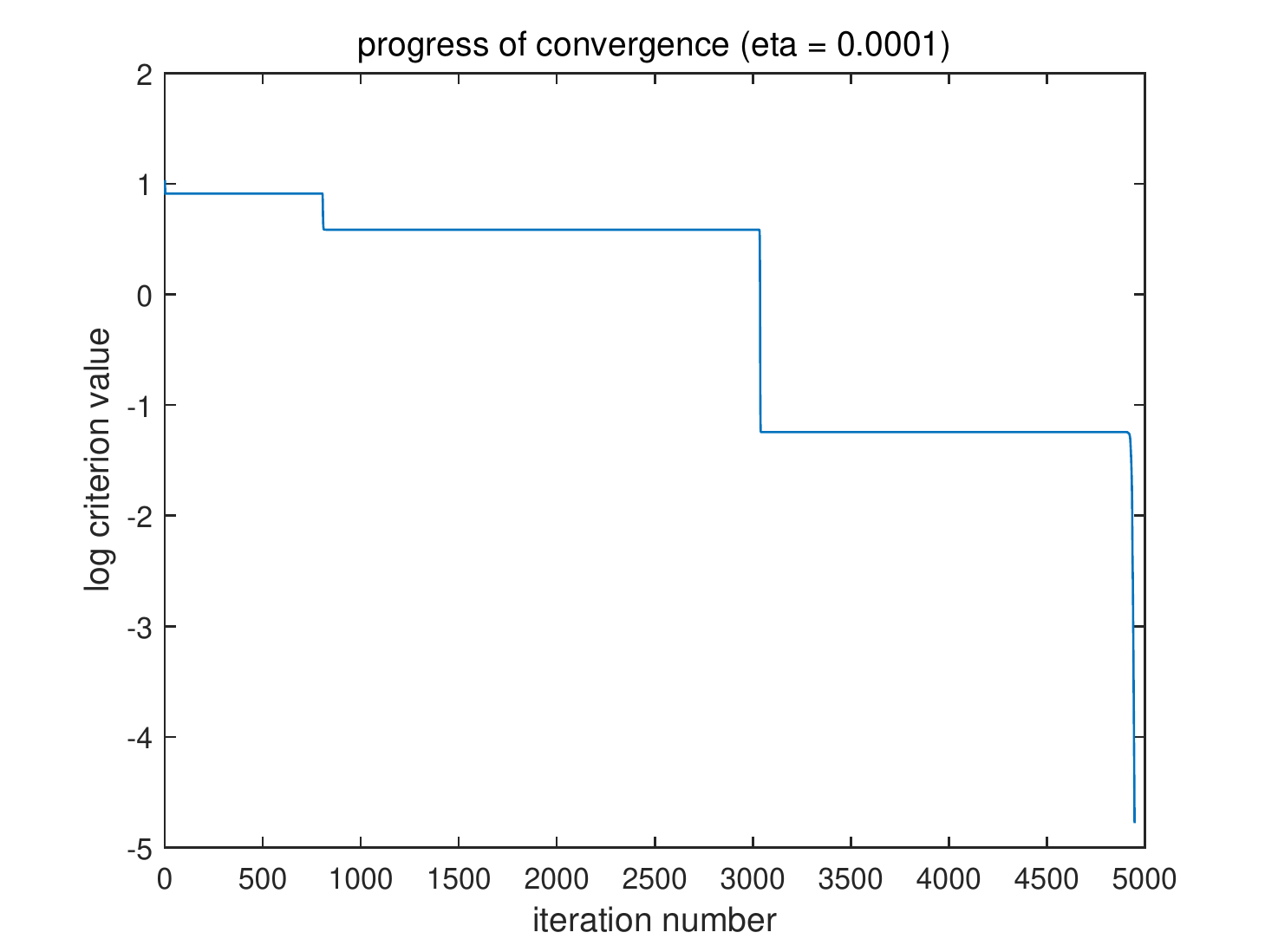}
\end{center}
\caption{Convergence behavior in example (\ref{smallexdata}) \label{smallex}}
\end{figure}
\vskip0mm\noindent
The convergence behavior of the iterative algorithm is shown in Fig.\,\ref{smallex}. As is evident from the figure, the convergence towards the true solution is not smooth at all. There are long periods in which little progress is made, and the algorithm appears almost trapped. On the other hand, once the algorithm is close to the true solution, progress is quick; reducing the tolerance level from $10^{-2}$, as used here, to levels close to machine accuracy would increase the total number of iterations just by a few percent. Upon inspection of the intermediate results, it appears that the phases of stagnation are linked to approximate solutions that, when used as a starting point for the IPFP algorithm, would lead to cycling, due to the presence of too many zeros.\footnote{Precise conditions for convergence of the IPFP algorithm for non-negative weight matrices are given in \cite{Pukelsheim}.} The algorithm passes closely by the following row-balanced matrices in succession:\footnote{More precisely, the matrices given are of the form $(x_{ij}^{(k)}) $ where $x^{(k)}_{ij}$ = $(z^{(k)}_{ij})^{1/\eta}$, and the matrices $\big(z_{ij}^{(k)}\big)$ are the ones that are actually computed in the proposed algorithm.}
$$
\begin{bmatrix}
         0   &&  0.1875   &&  0.0625 \\
    0.25   &&    0          &&   0         \\
    0.5     &&    0          &&   0
\end{bmatrix}
\rightarrow
\begin{bmatrix}
         0   &&  0.25 &&        0 \\
        0    &&    0    && 0.25   \\
      0.5   &&    0    &&     0    \\
\end{bmatrix}
\rightarrow
\begin{bmatrix}
         0    && 0.25  &&       0 \\
         0   && 0 &&    0.25     \\
    0.1875   &&  0.3125    && 0
\end{bmatrix}
\rightarrow
\begin{bmatrix}
         0    && 0.25    &&    0 \\
         0    && 0.05    && 0.2 \\
      0.2    && 0.3      &&  0
\end{bmatrix}.
$$
Each of the first three matrices above has the property that it would cause the IPFP algorithm to cycle; the final matrix corresponds to the true solution. When the regularization parameter is reduced, the algorithm follows essentially the same path, but takes smaller steps, and hence requires more iterations.

\section{Conclusions} \label{conclusions}

The purpose of this paper has been to establish a relationship between the optimal transport problem on the one hand, and a certain class of multi-objective equilibrium problems with linear reward functions on the other hand. This connection forms a natural complement to a similar relationship which exists in the strictly concave / strictly convex case. By the availability of this connection, it is possible to apply different perspectives to a given problem. This variety of viewpoints has been illustrated in the context of regularization, where it was shown that entropic regularization in the single-objective problem corresponds to isoelastic regularization in the multi-objective framework. The ability to switch between different interpretations may be useful in the design of algorithms. The curious behavior of the iterative algorithm for small values of the regularization parameter seems worthy of further investigation.

\bibliographystyle{plainnat}

\end{document}